  \numberwithin{figure}{section}
  \numberwithin{table}{section}
\numberwithin{hypothesis}{section}
\newtheorem{remark}{Remark}
\numberwithin{remark}{section}
\numberwithin{theorem}{section}
\newcommand{\Ii}{{{\rm i}}}
\newcommand{\diss}{discretization } 
\newcommand{\pd}{preconditioner}
\newcommand{\pdd}{preconditioner }
\newcommand{\dg}{diagonalization}
\newcommand{\dgg}{diagonalization }
\newcommand{\T}{\mathsf{T}}
\newcommand{\CA}{{\mathcal{A}}}
\newcommand{\CF}{{\mathcal{F}}}
\newcommand{\CD}{{\mathbb{D}}}
\newcommand{\CCD}{{\mathcal{D}}}
\newcommand{\BK}{{\mathbb{K}}}
\newcommand{\CP}{{\mathcal{P}}}
\newcommand{\CV}{{\mathcal{V}}}
\newcommand{\IR}{{\mathbb{R}}}
\newcommand{\IC}{{\mathbb{C}}}
\DeclareMathOperator{\Sinc}{Sinc}
\newcommand{\tol}{{\textit{\texttt{tol}}}}
\newcommand{\tn}[1]{\textnormal{#1}\ } 
\newcommand{\bmt}{\left[ \begin{array}{ccccccccccccccccccccccccccccccccccccc}}
	\newcommand{\emt}{\end{array}\right]}
\newcommand{\bmtx}{\left[ \begin{array}{c|cccccccccccccccccccccccccccccccccccc}}
	\newcommand{\emtx}{\end{array}\right]}
\newcommand{\bean}{\begin{eqnarray*}}
	\newcommand{\eean}{\end{eqnarray*}}
\newcommand{\bea}{\begin{eqnarray}}
	\newcommand{\eea}{\end{eqnarray}}
\newcommand{\eq}{\begin{equation}\begin{array}{lllllllll}}
		\newcommand{\ee}{\end{array}\end{equation}}
\newcommand{\eqn}{\begin{equation*}\begin{array}{lllllllll}}
		\newcommand{\een}{\end{array}\end{equation*}}
\renewcommand\normalsize{%
   \@setfontsize\normalsize\@xpt\@xiipt
   \abovedisplayskip 8.5\p@ \@plus2\p@ \@minus5\p@
   \abovedisplayshortskip \z@ \@plus3\p@
   \belowdisplayshortskip 8.5\p@ \@plus3\p@ \@minus3\p@
   \belowdisplayskip \abovedisplayskip
   \let\@listi\@listI}
\title{Parallel-in-time  preconditioners  for the Sinc-Nystr\"{o}m method}
 \author{Jun Liu\thanks{Department of Mathematics and Statistics, Southern Illinois University Edwardsville, Edwardsville, IL 62026, USA. E-mail: \texttt{juliu@siue.edu}}\and Shu-Lin Wu\thanks{School of Mathematics and Statistics, Northeast Normal University, Changchun 130024, China.
E-mail: \texttt{wushulin84@hotmail.com}} 
}
\begin{document}

\maketitle

\begin{abstract}
The Sinc-Nystr\"{o}m method  is a high-order {numerical  method} based on Sinc basis functions for  discretizing evolutionary differential equations in time. But in this method we have to solve all the time steps in one-shot (i.e. all-at-once), which results in  a large-scale  nonsymmetric dense system that is expensive to handle. 
			In this paper, we propose and analyze preconditioner   for  such dense system arising from  both the parabolic and hyperbolic PDEs.
			The proposed   preconditioner  is  a low-rank  perturbation of the original   matrix   {and  has two advantages. First, we show that  the
			 eigenvalues  of the preconditioned system are   highly clustered with some uniform bounds which are independent of the mesh parameters.   Second,  the preconditioner can be   used parallel for all the Sinc time points via  a block diagonalization procedure.  Such a parallel potential owes to the fact that  the eigenvector matrix of the  diagonalization is well conditioned. In particular, we show that the condition number of  the eigenvector matrix  only mildly grows as the number of Sinc time points increases, and thus the roundoff error    arising from the diagonalization procedure is controllable.}
			 The effectiveness of our proposed PinT preconditioners is verified by the observed  mesh-independent convergence rates of the preconditioned GMRES in reported {numerical} examples.		
 \end{abstract}

\begin{keywords}
Sinc-Nystr\"{o}m method,  Parallel-in-time preconditioner,  Kronecker product approximation,  GMRES,  diagonalization	
\end{keywords}

\begin{AMS}
65M55, 65M12, 65M15, 65Y05
\end{AMS}

\pagestyle{myheadings}
\thispagestyle{plain}

\markboth{J. Liu and S.-L. Wu}{PinT Preconditioners for Sinc-Nystr\"{o}m  method} 
	\section{Introduction}
	   Belonging to the large family of pseudospectral methods,
 	the {Sinc-Nystr\"{o}m} numerical method  \cite{stenger2012numerical} is a    special one  among numerous high-order discretization schemes that
 	can achieve an exponential order of accuracy for approximating ODEs/PDEs and integral equations \cite{Bialecki1988,Bialecki1989,Lin2013,Rahmoune2020}, even in the presence of boundary singularities and boundary
 	layers\footnote[1]{The presence of boundary singularities and layers {often dramatically}  
 	deteriorates the expected approximation accuracy of the standard finite difference and finite element discretization schemes, although such {a} degradation can be mildly alleviated with adaptive meshing or local refined meshing techniques.}. {Such a   method  lies in  first transforming  the initial-value  ODE    into a Volterra integral equation of the second kind and then applying
 	the  collocation   approximation to the latter. } 
 	Besides {the} exponential order
 	of accuracy, the   basis functions provide the computationally favorable Toeplitz structures of
 	the {discretization} matrix $\CA$, which will be used in this paper to facilitate  the development of {efficient \pdd  denoted by $\CP$}.  
{However, for large-scale ODEs (such as the ones  arise from semi-discretizing time-dependent PDEs in high dimension)  the unknowns over all the collocation   time points} are fully coupled and this requires to solve a large-scale nonsymmetric   dense system $\CA{\bm y}_h={\bm b}_h$,  which is often very  time consuming to solve.
 	In this paper, we propose and analyze  a structured   preconditioner for handling this problem. 
	
The novelty of the proposed  \pdd is twofold.  First,  as we  will show in Section \ref{sec4} the eigenvalues of the preconditioned matrix $\CP^{-1}\CA$ are highly clustered for both the parabolic and hyperbolic problems, which  indicates    fast convergence of the  preconditioned GMRES in practice  (confirmed by numerical results in Section \ref{sec5}).
Second,  the \pdd  can be used in parallel for all the Sinc time points. We briefly explain such a parallel-in-time (PinT)  implementation   as follows.   By    diagonalizing  $\CP$ as  $\CP=\CV\CCD\CV^{-1}$ with a block diagonal matrix $\CCD$ and $\CV=V\otimes I_n$,   we can compute $\CP^{-1}{\bm r}$  with any vector ${\bm r}$  via   three steps: 
	$$
	{\bm s}_1:=\CV^{-1}{\bm r}=(V^{-1}\otimes I_n){\bm r},~~~{\bm s}_2:=\CCD^{-1}{\bm s}_1,~~~\CP^{-1}{\bm r}=\CV{\bm s}_2=(V\otimes I_n){\bm s}_2,
	$$ 
	where  $V\in\mathbb{C}^{m\times m}$, $I_n$ and $I_m$ are identity matrices with  $m$ and $n$  being  respectively the number of   collocation time  points and the  dimension of  ODEs.   
 (More details on these three steps will be supplied in Section \ref{sec4.1}.)   The first and last steps only concern matrix-vector multiplications and thus the computation cost is relatively low (by taking into account the fact that $m\ll n$ is not large in practice due to  the exponential order of accuracy in time).  The major computation is the second step for ${\bm s}_2$, but each diagonal block  can be computed  in parallel for all the $m$ blocks. In the above three steps, we need to be cautious about the roundoff error arising from diagonalizing $\CP$. Large roundoff error would seriously pollute the accuracy and according to the analysis in \cite{GH19,GW19a} the roundoff error is proportional to the condition number of the eigenvector matrix $V$, i.e., ${\rm Cond}_2(V)$.  For the proposed \pdd $\CP$, we show that ${\rm Cond}_2(V)$ is of moderate magnitude and only weakly grows as $m$ increases.

{Another contribution of this paper is a new strategy for  applying the \dg-based \pdd for   nonlinear problems (or linear problems with time-varying coefficient matrix).   For these problems, the widely used approach is the   average-based Kronecker  {product} approximation proposed in \cite{gander2017time}. This approach  works well  if the  variance  of the Jacobian matrices over the time points is small. But if the variance  is large,  it may result in slow convergence or even divergence for the preconditioned GMRES method.  Here, we use the  
\textit{nearest  Kronecker {product}  approximation} (NKPA) technique for handling Jacobian matrices and numerical results indicate that the resulting NKPA-based \pdd is much more effective than that obtained via the \textit{averaging} approach in 
  \cite{gander2017time}.}
 	
 		PinT algorithms for evolutionary   problems attract considerable attentions  in the last two decades \cite{gander201550}, mainly due to the advent of massively parallel processors that provide a potential to significantly speed up the traditional sequential time-stepping schemes.
	Given the sequential nature of the forward time evolution, the development of effective PinT algorithms is more challenging than the counterparts in space. 
	There are several different types of PinT algorithms in literature,   such as the parareal algorithm \cite{LMT01}, the multigrid reduction in time ({MGRiT})  algorithm \cite{FS14}, deferred correction methods \cite{christlieb2011implicit,ong2020deferred}, and the   \dg-based technique  \cite{MR08}.  
	 The mechanism of each algorithm varies greatly, which leads to significant difference in application scopes, convergence properties and   parallel efficiency.  In particular, the 
		 \dg-based technique   which is built upon diagonalizing   the time discretization matrix within the so-called  all-at-once system  shows promising speedup (see    numerical results in \cite{gander2020paradiag,  GW19b}).  As we will see in Section \ref{sec2}, such an all-at-once system arises naturally in the Sinc-Nystr\"{o}m methods and  therefore   we continue to investigate  such a technique in this paper.  {The \dgg technique was first proposed by Maday and R{\o}nquist in 2008 \cite{MR08} and then followed by many authors  \cite{GH19,Shu2018TOWARD,GW19b,MPW18,danieli2021spacetime,2011Parallel,2021A,LW20}.  (A summary of the \dg-based PinT algorithms  can be found in \cite{gander2020paradiag}.)  These previous work use the time-stepping method (e.g., the linear multistep methods or the Runge-Kutta method)  and the   time \diss  matrix is     a   lower triangular Toeplitz matrix (the all-at-once matrix $\CA$ is  of {\em block} version).  In this case, it is natural to define   the   \pdd $\CP$ as a block circulant matrix and  many good properties of the time \diss matrix, such as  the sparsity, Toeplitz structure and diagonal dominance, can be utilized for  the  spectral analysis of $\CP^{-1}\CA$. However, for the Sinc-Nystr\"{o}m method  the time \diss matrix      is a dense non-symmetric matrix  and there is no  clear structure for the all-at-once matrix $\CA$, which leads to essential difficulty for  constructing  an efficient \pdd and for  analyzing the spectrum of $\CP^{-1}\CA$. } 
 	
 	 	The {rest} of this paper is organized as follows. 
	In Section 2, we introduce the  Sinc-Nystr\"{o}m method
	for both linear and nonlinear initial-value ODEs, where the corresponding linear and nonlinear all-at-once systems are formulated.
	 In Section 3,  preconditioners for the heat equations and the wave equations are introduced,  where the spectrum of the preconditioned systems are carefully estimated. 
	In Section 4, we 
	study  the convergence performance of our proposed preconditioners for both parabolic and hyperbolic PDEs  and validate the spectrum analysis by several numerical experiments. We conclude this paper in   Section 5.

	\section{The Sinc-Nystr\"{o}m  method and the all-at-once system}\label{sec2}
	Following the notations used in \cite{stenger2012numerical,okayama2015theoretical}, in this section  we  briefly {revisit   the 
 Sinc-Nystr\"{o}m  method} for solving the linear and nonlinear initial-value ODEs.    The involved structured matrices for the all-at-once system are  given 
	for facilitating the later development and analysis of  the proposed  \pd.   
	
	\subsection{{The  Sinc-Nystr\"{o}m  method}} 
	For a given positive constant $d\in (0,\pi)$, we define a strip domain $\mathcal{D}_d$  in the complex plane  and a single-exponential conforming map $\phi(z)$
	$$
	\mathcal{D}_d:=\{z\in\IC: |{\rm Im}(z)|<d\}, ~\phi(z):=\ln\frac{z-a}{b-z}. 
	$$
	{The function $\phi(z)$ maps  a finite interval $(a,b)$ to $(-\infty,\infty)$}.   
	Define a domain $\mathcal{D}$  from  $\mathcal{D}_d$ via
	\[
	\mathcal{D}=\psi(\mathcal{D}_d):=\{z=\psi(\zeta): \zeta \in \mathcal{D}_d\},~\psi(z):=\phi^{-1}(z)=\frac{a+be^z}{1+e^z}. 
	\]
	{In this paper 
	we will only consider the case that $(a, b)$ is a bounded interval}, i.e.,   $(a,b)=(0,T)$, but 
 unbounded time intervals can  be addressed {as well}   by using  different conforming maps.
We denote by $\bm H(\mathcal{D})$ the family of analytic functions on $\mathcal{D}$
and for a given $h>0$ we define  the  Wiener function space 
\begin{equation}\label{eq4}
	\bm W(\pi/h):=\left\{f\in  \bm H(\IC): \int_{\IR} |f(t)|^2 dt<\infty\ {\rm and}\ |f(z)|\le C e^{\pi|z|/h}\right\},
\end{equation}
where $C>0$ is a constant.	
The  Sinc-Nystr\"{o}m  method is based on the   Sinc function on $\IR$
	\[ \Sinc(x)=\\
	\left\{
	\begin{array}{cl}
		\frac{\sin(\pi x)}{\pi x}, &x\ne 0,\\
		1, &x= 0.
	\end{array}
	\right.
	\]
	By shifting the Sinc function with a given $h>0$, we can define the set of Sinc basis functions 
	\[
	S[j,h](x):=\Sinc(x/h-j),\quad  j=0,\pm 1,\pm 2,\cdots,
	\]
	which forms a complete orthogonal sequence in the Winner function space $\bm W(\pi/h)$.
	Therefore, for any function $u\in \bm W(\pi/h)$ we have the    Sinc series expansion (also known as the Paley--Wiener theorem)
	\[
	u(x)={{\sum}}_{j=-\infty}^\infty u(jh)S[j,h](x),
	\]
	which {results in a practical numerical method after  truncation by choosing  ${M}$ and $h$  suitably} 
	\[
	u(x)\approx {{\sum}}_{j=-{M}}^{{M}} u(jh)S[j,h](x).
	\]
	{In practice, we can approximate any   function $f(t)$ defined on a finite interval $(a,b)$  
	through the  function  composition with the conformal map $\phi$} as follows
	\begin{equation}
		\label{f_h}
		f(t) \approx f_h(t):= {{\sum}}_{j=-{M}}^{{M}} f_j S[j,h]\circ\phi(t):={{\sum}}_{j=-{M}}^{{M}} f(t_j) S[j,h](\phi(t)), 
	\end{equation}
	where $f_j:=f(t_j)$ are the interpolation points at the $m=2{M}+1$ Sinc  time  points $t_j=\psi(jh ), j=-{M},\ldots,{M}.$
	Since the basis functions $S[j,h]\circ\phi(t)$ vanish at the end points $t=a$ and $t=b$, 
	the above Sinc approximation is not accurate near the end points if $f(a)\ne 0$ and/or $f(b)\ne 0$. To  handle $f(a)\ne 0$ and/or $f(b)\ne 0$,  the above   approximation  can be modified to 	\begin{equation}
		\label{mf_h}
		f(t) \approx \widehat f_h(t):=f_{-{M}} w_a(t)+f_N w_b(t)+ {{\sum}}_{j=-{M}}^{{M}} (f_j-f_{-{M}} w_a(t_j)-f_N w_b(t_j)) S[j,h]\circ\phi(t), 
	\end{equation}
	where  two auxiliary basis functions  $w_a(t):=(b-t)/(b-a)$ and $w_b(t):=(t-a)/(b-a)$
	are introduced to accommodate the possible nonzero end points.
{To use the above approximation for ODEs we also need the following integral form of  \eqref{f_h}}: 
	\begin{equation}\label{intf_h}
	\begin{split}
		\int_{a}^t f(s)ds \approx \int_{a}^t f_h(s)ds& = {\sum}_{j=-{M}}^{{M}} f_j \int_{a}^t S[j,h]\circ\phi(s)ds\\
		&= {\sum}_{j=-{M}}^{{M}} f_j \psi'(jh) J[j,h]\circ\phi(t),
		\end{split}
	\end{equation}
	where {$J[j,h](x):=h\left(\frac{1}{2}+\frac{1}{\pi}\int_0^{\pi (x/h-j)}\frac{\sin(\tau)}{\tau}d\tau\right)$}.

	We next revisit  exponential convergence results for the above two   approximations. To this end, we introduce the following function space 
	\[
	\bm H^{\infty}(\mathcal{D}):=\left\{f\in \bm H(\mathcal{D}):  {\rm sup}_{z\in\mathcal{D}}|f(z)|<\infty\right\}.
	\]
	For any positive constant $\alpha\in (0,1]$ and some constants $C_1$ and $C_2$, let 
\begin{equation*}
\begin{split}
&\bm L_{\alpha}(\mathcal{D}):=\left\{f\in \bm H^{\infty}(\mathcal{D}):  
	|f(z)|\le C_1 |(z-a)(b-z)|^{\alpha}
	\right\},\\
&\bm M_{\alpha}(\mathcal{D}):=\left\{f\in \bm H^{\infty}(\mathcal{D}): |f(z)-f(a)|\le C_2 |(z-a)|^{\alpha}\ {\rm and}\ |f(b)-f(z)|\le C_2 |(b-z)|^{\alpha}
	\right\}.
\end{split}
\end{equation*} 
	\begin{theorem}[\cite{stenger2012numerical}] \label{Thm2.1}
		Let $f\in\bm M_\alpha(\psi(\mathcal{D}_d))$ with $d\in (0,\pi)$ and ${M}$ be a positive integer.  By choosing  $h=\sqrt{\frac{\pi d}{\alpha {M}}}$,   there exists a constant $C$ (independent of ${M}$ and $h$) such that
		\[
		\max_{a\le t\le b} |f(t)-\widehat f_h(t)|\le C\sqrt{{M}}\exp(-\sqrt{\pi d \alpha {M}}).
		\]
	\end{theorem}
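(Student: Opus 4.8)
The plan is to reduce the estimate to the classical Sinc interpolation error bound on $\mathbb{R}$ and to exploit the endpoint correction built into \eqref{mf_h} so as to neutralise the (possibly) non-vanishing boundary data of $f$.

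First I would peel off the boundary data. Put $G(t):=f(t)-f(a)\,w_a(t)-f(b)\,w_b(t)$; since $w_a(a)=w_b(b)=1$ and $w_a(b)=w_b(a)=0$, the function $G$ vanishes at $t=a,b$, and the hypothesis $f\in\bm M_\alpha(\psi(\mathcal{D}_d))$ together with the boundedness of the domain $\psi(\mathcal{D}_d)$ gives $G\in\bm L_\alpha(\psi(\mathcal{D}_d))$; equivalently $G\circ\psi$ is analytic and bounded on the strip $\mathcal{D}_d$ and decays like $e^{-\alpha|\mathrm{Re}\,\zeta|}$ as $\mathrm{Re}\,\zeta\to\pm\infty$ (this uses $\psi(z)-a=\tfrac{(b-a)e^{z}}{1+e^{z}}$ and $b-\psi(z)=\tfrac{b-a}{1+e^{z}}$). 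Because the linear auxiliary functions $w_a,w_b$ enter \eqref{mf_h} both as stand-alone terms and inside the summation, a direct computation yields
$$
f(t)-\widehat f_h(t)=\Big(G(t)-{\textstyle\sum_{j=-M}^{M}}G(t_j)\,S[j,h](\phi(t))\Big)+R_M(t),
$$
where $R_M$ collects the error caused by the fact that \eqref{mf_h} interpolates the node values $f_{\pm M}=f(t_{\pm M})$ rather than the true endpoint values $f(a),f(b)$; the bracketed term is precisely the truncated Sinc interpolation error of the $\bm L_\alpha$ function $G$, and $R_M$ will be shown to be of lower order.

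The technical heart is to bound the truncated Sinc interpolation error of $G$ by the standard splitting into a \emph{discretization error} and a \emph{truncation error},
$$
G(t)-{\textstyle\sum_{j=-M}^{M}}G(t_j)S[j,h](\phi(t))
=\underbrace{\Big(G(t)-{\textstyle\sum_{j=-\infty}^{\infty}}G(t_j)S[j,h](\phi(t))\Big)}_{E_D(t)}
+\underbrace{{\textstyle\sum_{|j|>M}}G(t_j)S[j,h](\phi(t))}_{E_T(t)}.
$$
For $E_D$ I would invoke the classical residue/contour-integral representation of the cardinal-series remainder for $G\circ\psi$ over the boundary of the strip $|\mathrm{Im}\,\zeta|<d$: the analyticity and uniform boundedness of $G\circ\psi$ on $\mathcal{D}_d$, combined with $|\sin(\pi\zeta/h)|^{-1}=O(e^{-\pi d/h})$ on $\mathrm{Im}\,\zeta=\pm d$, give $\sup_{a\le t\le b}|E_D(t)|\le C_D\,e^{-\pi d/h}$. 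This is the step I expect to be the main obstacle, since it rests on the delicate complex-analytic estimates underpinning Sinc quadrature/interpolation. For $E_T$, the H\"older decay $|G(t_j)|\le C\,e^{-\alpha|j|h}$ together with the boundedness of the sinc kernel and the standard truncation lemma for $\bm L_\alpha$ functions gives $\sup_{a\le t\le b}|E_T(t)|\le C_T\,\dfrac{e^{-\alpha M h}}{1-e^{-\alpha h}}$.

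Finally I would balance the two exponentials. The choice $h=\sqrt{\pi d/(\alpha M)}$ makes $\pi d/h=\alpha M h=\sqrt{\pi d\,\alpha M}$, so $\sup|E_D|$ and $\sup|E_T|$ are both $O\!\big(\sqrt M\,e^{-\sqrt{\pi d\,\alpha M}}\big)$, the factor $\sqrt M$ coming from $\dfrac{1}{1-e^{-\alpha h}}=O(h^{-1})=O(\sqrt M)$. For the remainder, $|t_{-M}-a|+|t_M-b|=O(e^{-Mh})$ (immediate from the formulas for $\psi(z)-a$ and $b-\psi(z)$), so the H\"older condition gives $|f(t_{-M})-f(a)|+|f(t_M)-f(b)|=O(e^{-\alpha Mh})$; multiplying by the (at most logarithmically large) Sinc interpolation errors of the linear functions $w_a,w_b$ shows $\|R_M\|_\infty=O(\sqrt M\,e^{-\sqrt{\pi d\,\alpha M}})$ as well. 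Summing the three bounds and using $\|w_a\|_\infty,\|w_b\|_\infty\le 1$ completes the proof.
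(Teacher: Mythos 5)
This theorem is quoted by the paper directly from Stenger's book \cite{stenger2012numerical}; the paper itself supplies no proof. Your sketch correctly reconstructs the standard argument behind the cited result --- subtracting the linear boundary interpolant to get a function $G\in\bm L_\alpha(\psi(\mathcal{D}_d))$, splitting its cardinal-series error into a discretization part (bounded via the contour-integral representation, where it is the decay $|G\circ\psi(\zeta)|\le Ce^{-\alpha|\mathrm{Re}\,\zeta|}$ you established, not mere boundedness, that makes the boundary integral finite) and a truncation part (bounded by the H\"older decay at the nodes, producing the $\sqrt{M}$ factor), balancing the two exponentials with $h=\sqrt{\pi d/(\alpha M)}$, and handling the $f(t_{\pm M})$-versus-$f(a),f(b)$ discrepancy through the $O(e^{-\alpha Mh})$ node gap times the logarithmically growing sinc Lebesgue constant --- so it is essentially the same proof as in the cited source and I see no substantive gap.
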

	\begin{theorem}[\cite{okayama2013error}] \label{Thm2.2}
		Let $f\in\bm L_\alpha(\psi(\mathcal{D}_d))$ with $d\in (0,\pi)$ and ${M}$ be a positive integer. By choosing  $h=\sqrt{\frac{\pi d}{\alpha {M}}}$,   there exists a constant $C$ (independent of ${M}$ and $h$) such that
		\[
		\max_{a\le t\le b} \left|\int_a^t f(s)ds-{{\sum}}_{j=-{M}}^{{M}} f_j \psi'(jh) J[j,h]\circ\phi(t)\right|\le C \exp(-\sqrt{\pi d \alpha {M}}).
		\]
	\end{theorem}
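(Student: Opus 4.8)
The plan is to transfer the indefinite integral from $(a,b)$ to $\IR$ via the conformal substitution $s=\psi(\tau)$, split the transformed error into a cardinal-series (discretization) part and a truncation part, bound each uniformly in the upper limit, and finally calibrate $h$ so that the two bounds match. Setting $g(\tau):=f(\psi(\tau))\psi'(\tau)$ and $x:=\phi(t)$, the substitution (which sends $s=a$ to $\tau=-\infty$ and $s=t$ to $\tau=x$) gives $\int_a^t f(s)\,ds=\int_{-\infty}^{x}g(\tau)\,d\tau$, while the identity $J[j,h](x)=\int_{-\infty}^{x}S[j,h](\tau)\,d\tau$ together with $t_j=\psi(jh)$ rewrites the approximant as $\sum_{j=-M}^{M}g(jh)\,J[j,h](x)$. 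Thus it suffices to bound $\mathcal{E}(x):=\int_{-\infty}^{x}g(\tau)\,d\tau-\sum_{j=-M}^{M}g(jh)\,J[j,h](x)$ uniformly for $x\in\IR$. First I would record two facts inherited from $f\in\bm L_\alpha(\psi(\mathcal{D}_d))$: since $\psi$ maps $\mathcal{D}_d$ conformally onto $\psi(\mathcal{D}_d)$ (here $d<\pi$ keeps $\psi'$ pole-free on $\mathcal{D}_d$), the function $g$ is analytic on $\mathcal{D}_d$; and using the elementary identity $(\psi(\zeta)-a)(b-\psi(\zeta))=(b-a)\psi'(\zeta)$ together with the fact that $|\psi'(\zeta)|$ decays like $e^{-|\mathrm{Re}\,\zeta|}$ as $|\mathrm{Re}\,\zeta|\to\infty$, the defining bound $|f(z)|\le C_1|(z-a)(b-z)|^\alpha$ yields $|g(\zeta)|\le C\,|\psi'(\zeta)|^{1+\alpha}=O(e^{-(1+\alpha)|\mathrm{Re}\,\zeta|})$, uniformly on every sub-strip $\mathcal{D}_{d'}$ with $d'<d$. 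In particular $g\in L^1(\IR)$ and $\int_\IR|g(\xi\pm\mathrm{i}d')|\,d\xi<\infty$.

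Next I would split $\mathcal{E}(x)=E_{\mathrm d}(x)+E_{\mathrm t}(x)$, where $E_{\mathrm d}(x):=\int_{-\infty}^{x}g(\tau)\,d\tau-\sum_{j\in\mathbb{Z}}g(jh)\,J[j,h](x)$ is the discretization error of the full-line Sinc indefinite-integration rule and $E_{\mathrm t}(x):=\sum_{|j|>M}g(jh)\,J[j,h](x)$ is the truncation error. For the truncation term, the uniform bound $|J[j,h](x)|\le c\,h$ (from the definition of $J[j,h]$ and boundedness of the sine integral) combined with the decay of $g$ gives $|E_{\mathrm t}(x)|\le c\,h\sum_{|j|>M}C e^{-(1+\alpha)|j|h}\le C'\,e^{-(1+\alpha)Mh}$, uniformly in $x$. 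For the discretization term, write $E_{\mathrm d}(x)=\int_{-\infty}^{x}\eta(\tau)\,d\tau$ with $\eta:=g-\sum_{j}g(jh)S[j,h]$ the cardinal-interpolation error of $g$, and use the classical contour representation (residue theorem applied to $\zeta\mapsto g(\zeta)/[(\zeta-\tau)\sin(\pi\zeta/h)]$ over $\partial\mathcal{D}_{d'}$),
\[
\eta(\tau)=\frac{\sin(\pi\tau/h)}{2\pi\mathrm{i}}\Bigl(\int_{\IR-\mathrm{i}d'}-\int_{\IR+\mathrm{i}d'}\Bigr)\frac{g(\zeta)\,d\zeta}{(\zeta-\tau)\sin(\pi\zeta/h)}.
\]
Interchanging the $\tau$- and $\zeta$-integrations (legitimate by absolute integrability) reduces $E_{\mathrm d}(x)$ to a $\zeta$-integral whose inner factor is $\int_{-\infty}^{x}\frac{\sin(\pi\tau/h)}{\zeta-\tau}\,d\tau$; this oscillatory integral is bounded by an absolute constant uniformly in $x\in\IR$ and in $\zeta$ on the lines $\mathrm{Im}\,\zeta=\pm d'$, and since $|\sin(\pi\zeta/h)|\ge\sinh(\pi d'/h)$ there, one obtains $|E_{\mathrm d}(x)|\le C\,e^{-\pi d'/h}\int_\IR|g(\xi\pm\mathrm{i}d')|\,d\xi$ uniformly in $x$; letting $d'\uparrow d$ (absorbing the mild growth of the line-integral into $C$) yields $|E_{\mathrm d}(x)|\le C\,e^{-\pi d/h}$.

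Finally I would combine the two bounds: $\max_{a\le t\le b}|\mathcal{E}(\phi(t))|\le C\bigl(e^{-\pi d/h}+e^{-(1+\alpha)Mh}\bigr)$, and the prescribed choice $h=\sqrt{\pi d/(\alpha M)}$ turns the discretization bound into $e^{-\pi d/h}=e^{-\sqrt{\pi d\alpha M}}$ and the truncation bound into $e^{-(1+\alpha)Mh}=e^{-\frac{1+\alpha}{\sqrt\alpha}\sqrt{\pi d M}}$, which is of strictly higher order because $\tfrac{1+\alpha}{\sqrt\alpha}>\sqrt\alpha$; hence the discretization error dominates and the claimed bound $C e^{-\sqrt{\pi d\alpha M}}$ follows, with $C$ independent of $M$ and $h$. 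The main obstacle is exactly the discretization estimate: because this is an \emph{indefinite} integration formula one cannot simply invoke a known $L^\infty$ interpolation or quadrature bound, but must control $\int_{-\infty}^{x}\eta$ uniformly over the free upper limit $x$, so the technical heart is the uniform-in-$x$ (and in $\zeta\in\partial\mathcal{D}_{d'}$) bound for the oscillatory integral $\int_{-\infty}^{x}\frac{\sin(\pi\tau/h)}{\zeta-\tau}\,d\tau$; a secondary point is the limiting argument $d'\uparrow d$, forced by $g$ being controlled only on the \emph{open} strip — the eye-shaped domain $\psi(\mathcal{D}_d)$ having its tips at $a$ and $b$.
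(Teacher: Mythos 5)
The paper gives no proof of Theorem \ref{Thm2.2} (it is quoted from \cite{okayama2013error}), and your argument is essentially the standard proof from that literature (Haber/Stenger/Okayama et al.): pull back to $\IR$ via $\psi$, split into the discretization error of the infinite Sinc indefinite-integration rule and a truncation error, bound the former by the contour representation of the cardinal-interpolation error (yielding the factor $1/\sinh(\pi d'/h)\sim e^{-\pi d'/h}$) and the latter by the decay $|g(\zeta)|\le Ce^{-(1+\alpha)|\mathrm{Re}\,\zeta|}$, then balance with $h=\sqrt{\pi d/(\alpha M)}$; this is sound. The only loose point is justifying the $\tau$--$\zeta$ interchange by ``absolute integrability'': the kernel $\sin(\pi\tau/h)/(\zeta-\tau)$ is not absolutely integrable in $\tau$ over a half-line, so one should truncate to $(-R,x)$, apply Fubini there, and let $R\to\infty$ using the uniform-in-$\zeta$ convergence that your integration-by-parts bound already provides --- a routine repair that does not affect the result.
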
 	
	
	\subsection{The  all-at-once system}
	We now introduce the Sinc-Nystr\"{o}m  method to linear and nonlinear ODE systems and the resulting all-at-once system. Efficient computation of such a  system  plays a central role in the practical applications of this method.  
	\subsubsection{Linear time-varying ODEs}
	We first consider the {following initial value ODEs}  
	\begin{equation} \label{ODEIVP}
		\begin{split}
			y'(t)&=K(t) y(t)+g(t),~  y(0)=r\in\IR^n,~t\in(0, T), 
		\end{split}
	\end{equation}
	where $y(t),g(t)\in\IR^n$ are vector functions and $K(t)\in\IR^{n\times n}$ is a time-dependent coefficient matrix. 
Such ODEs can also be derived from semi-discretized parabolic and hyperbolic PDEs.  {To apply the  Sinc-Nystr\"{o}m method, we} first rewrite \eqref{ODEIVP} into an  integral equation  	\[
	y(t)=r+\int_0^t \{K(s)y(s)+g(s)\}ds,~{t\in(0, T)}.
	\]
	According to  \eqref{intf_h}, we get the  Sinc-Nystr\"{o}m approximation of $y(t)$ as 
	\bea \label{Sinc-Col}
	y^h(t)= r+{\sum}_{j=-{M}}^{{M}} \{K(t_j)y^h(t_j)+g(t_j)\} \psi'(jh) J[j,h]\circ\phi(t),
	\eea
	which, by collocating at the same $m:=2{M}+1$ time points $\{t_l\}_{l=-M}^M$, leads to
	  
	\bea \label{Sinc-system}
	y^h(t_l)= r+{\sum}_{j=-{M}}^{{M}} \{K(t_j)y^h(t_j)+g(t_j)\} \psi'(jh) J[j,h]\circ\phi(t_l),\quad l=-{M},\cdots,{M}.
	\eea
	By definitions we have $\psi'(jh)=1/\phi'(t_j)$, $\phi(t_l)=\phi(\psi(lh))=lh$, and 
	\[
	J[j,h](lh)= h\left(\frac{1}{2}+ \int_0^{(l-j)} \frac{\sin(\pi t)}{\pi t} dt\right)
	=:h\sigma_{l-j}^{(-1)}.
	\]
	Define the $m\times m$ dense Toeplitz matrix 
	$$I^{(-1)}=\left[I^{(-1)}_{l,j} \right]:=\left[ \sigma^{(-1)}_{l-j}\right]_{l,j=1}^m=\left[\frac{1}{2}+ \int_0^{(l-j)} \frac{\sin(\pi t)}{\pi t} dt\right]_{l,j=1}^m,$$
	whose (complex) eigenvalues lie in the open right half plane \cite{han2014proof}. 
	For any given scalar function $g$, define the $m\times m$ diagonal matrix $\CD(g)={\rm diag}(g(t_{-{M}}),\cdots,g(t_{M}))$ over the $m$ time points. Let {$I_p\in\mathbb{R}^{p\times p}$ be} an identity matrix of size $p\times p$
	and $e_m=[1,1,\cdots,1]^\T \in \IR^m$ be a column vector of all ones.
	We use $(\cdot)^\T$ and $(\cdot)^*$ to denote the non-conjugate transpose and conjugate transpose, respectively.
	With the Kronecker product notations, the   Sinc-Nystr\"{o}m discretization scheme (\ref{Sinc-system})
	can be formulated into  an all-at-once linear system after suitable ordering the unknowns  
	\begin{align} \label{ODEIVPSinc}
		\CA\bm y_h:=\left(I_m\otimes I_n-(I^{(-1)}D\otimes I_n)\BK \right)\bm y_h=
		\bm b_h,
	\end{align}
	where $\bm y_h=[y(t_{-{M}}); \cdots;  y(t_{{M}})]\in\IR^{mn},\
	\bm g_h=[g(t_{-{M}});\ \cdots; \ g(t_{{M}})]\in\IR^{mn},\ 
	\bm f_h=e_m\otimes r \in\IR^{mn}
	$,    $D=h\CD(1/\phi')$ with $1/\phi'(t)=t(T-t)/T>0$, ${\bm b}_h=(I^{(-1)}D\otimes  I_n)\bm g_h+\bm f_h$ and $\BK$ is a block-diagonal matrix given by
	$$
	\BK={\rm blockdiag} \left(K(t_{-{M}}),\cdots,K(t_{{M}})) \right)\in \IR^{mn\times mn}.
	$$
	In the simple case of constant coefficient matrix $K(t)=K$, there obviously holds
	$\BK=I_m\otimes K$ and hence  $(I^{(-1)}D\otimes I_n)\BK=(I^{(-1)}D\otimes I_n)(I_m\otimes K)=I^{(-1)}D\otimes K$, which reduces (\ref{ODEIVPSinc})  to
	\begin{align} \label{ODEIVPSincK}
		\CA\bm y_h:=\left(I_m\otimes I_n-I^{(-1)}D\otimes K \right)\bm y_h= \bm b_h.
	\end{align}
	
	Under certain assumptions on $K(t)$ and $g(t)$, it was shown {in} \cite{stenger2012numerical,okayama2018theoretical,hara2019error} that
	the linear {all-at-once system} (\ref{ODEIVPSinc}) with a sufficiently large ${M}$ is uniquely solvable and 
	the obtained Sinc approximation $\widehat y^h(t)$ in the form of (\ref{mf_h})   converges to $y(t)$ exponentially, i.e., 
	\[
 {\max_{0\le t\le T}\|y(t)-\widehat y^h(t)\|_{\infty}} =O\left(\sqrt{{M}}e^{-\sqrt{\pi d\alpha {M}}}\right).
	\]
 Although the exponential convergence of the above Sinc-Nystr\"{o}m discretization is well established,  {to the best of our knowledge
	the development of fast solvers for solving  the all-at-once linear systems (\ref{ODEIVPSinc}) and (\ref{ODEIVPSincK}) were not addressed in literature so far}.
	We note that an efficient solver for these all-at-once systems  is  crucial if the  ODE system  is very stiff {and/or the ODE system is of large scale}, such as the one  derived from  {semi-discretizing time-dependent PDEs}.  
	
		\subsubsection{Nonlinear ODEs}
	We next consider the nonlinear ODEs 
	\begin{align} \label{ODEIVP_NL}
		\begin{split}
			y'(t)&=q(t,y(t))+g(t), ~y(0)=r\in\IR^n,~{t\in(0, T)}, 
		\end{split}
	\end{align}
	where   $q(t,y(t))=[q_1(t,y(t)),q_2(t,y(t)),\cdots,q_n(t,y(t)))]^\T\in\IR^n$.
	The same Sinc-Nystr\"{o}m discretization of (\ref{ODEIVP_NL}) leads to a system of nonlinear equations 
	\bea \label{Sinc-system-NL}
	y^h(t_l)= r+{\sum}_{j=-{M}}^{{M}} \{q(t_j,y^h(t_j))+g(t_j)\} \psi'(jh) J[j,h]\circ\phi(t_l),\quad l=-{M},\cdots,{M}, 
	\eea
	which can be formulated into   the following all-at-once  form 
	\begin{align} \label{ODEIVPSinc-NL}
		\CF(\bm y_h):=(I_m\otimes I_n)\bm y_h-(I^{(-1)}D\otimes I_n)\mathsf{q} (\bm y_h)=
		(I^{(-1)}D\otimes  I_n)\bm g_h+\bm f_h=:\bm b_h,
	\end{align}
	with the nonlinear part 
	$\mathsf{q}(\bm y_h)=[q(t_{-{M}},y_{-{M}});\cdots;q(t_{{M}},y_{{M}})]\in \IR^{mn}$ (here   $y_l=y^h(t_l)$). 
	The Jacobian matrix of $\CF(y_h)$ reads
	\bea \label{JacF}
	\nabla \CF(y_h)=(I_m\otimes I_n)-(I^{(-1)}D\otimes I_n)\mathsf{Q}(\bm y_h),
	\eea
	where 
	\bea \label{Qh}
	\mathsf{Q}(\bm y_h):=\nabla_y \mathsf{q}(\bm y_h)={\rm blockdiag}(\nabla_y q(t_{-{M}},y_{-{M}}),\cdots,\nabla_y q(t_{{M}},y_{{M}})))\in \IR^{mn\times mn}
	\eea
	is {a} block-diagonal {matrix} 
	with $\nabla_y q$ being the Jacobian matrix of  $q$ with respect to $y$, given by
	\[
	\nabla_y q(t,y):=\bmt \frac{\partial q_1}{\partial y_1} &\frac{\partial q_1}{\partial y_2} &\cdots & \frac{\partial q_1}{\partial y_n} \\
	\frac{\partial q_2}{\partial y_1} &\frac{\partial q_2}{\partial y_2} &\cdots & \frac{\partial q_2}{\partial y_n}\\
	\vdots\\
	\frac{\partial q_n}{\partial y_1} &\frac{\partial q_n}{\partial y_2} &\cdots & \frac{\partial q_n}{\partial y_n}
	\emt\in\IR^{n\times n}.
	\]
	{Applying Newton's iteration to  (\ref{ODEIVPSinc-NL})  leads to}
	\bea \label{Newton}
	\bm y_h^{(k+1)}=\bm y_h^{(k)}-\left[\nabla\CF(y_h^{(k)})\right]^{-1}\left(\CF(\bm y_h^{(k)})-\bm b_h\right),\quad k=0,1,2,\cdots
	\eea
	{where  $\bm y_h^{(0)}$ is the initial guess}. 
	We see that  the Jacobian  matrix $\nabla\CF(y_h^{(k)})$ in (\ref{Newton})  has  the same structure {as   (\ref{ODEIVPSinc})} and therefore a \pdd  for (\ref{ODEIVPSinc}) is   also applicable to   (\ref{Newton}) as well. 	For convergence of  the above Newton iteration,
	a variant of the well-known Newton-Kantorovich theorem is given in  \cite[p. 344, Theorem 6.4.4]{stenger2012numerical}.  		In general, the Newton iteration achieves only local convergence within a short time window and 
		to handle   a much longer time interval we can first split the whole time interval into several subintervals  and  then apply the Newton iterations to these subintervals  one after another. 
	
\section{{The}   \pdd  and {the} spectrum  analysis}\label{sec4}
	In this section,  we  {first propose} a  PinT \pdd $\CP$ for solving the  all-at-once system (\ref{ODEIVPSinc}) and {then we  give a} spectral analysis for the preconditioned matrix $\CP^{-1}\CA$.      
		We start by discussing the simple constant coefficient case (\ref{ODEIVPSincK}), where the {all-at-once} matrix is 
	\begin{align} \label{constantK}
		\CA = I_m\otimes I_n-I^{(-1)}D\otimes K.
	\end{align}
	{The \pdd for $\CA$ is different for the case $\sigma(K)\subset\mathbb{R}$ and $\sigma(K)\subset\Ii\mathbb{R}$, where $\sigma(K)$ denotes the spectrum of $K$. We note that these are two representative cases: the first case represents that the differential equation is dissipative while the second case corresponds to wave propagation problems (e.g., $K$ is the discrete matrix of a wave equation). }
\subsection{{The    \pdd for  the case $\sigma(K)\subset\mathbb{R}$}}\label{sec4.1}
	In view of the special Toeplitz structure of $I^{(-1)}$ in $\CA$, we propose the following \pd 
	\bea \label{pre_P}
	\CP= I_m\otimes I_n- S D\otimes K,
	\eea
	where the Toeplitz matrix $I^{(-1)}$  is approximated by its skew-symmetric part  \cite{ng2003circulant}:
	$$
	S=\frac{I^{(-1)}-(I^{(-1)})^\T}{2}.
	$$ 
	 {A routine calculation shows}   that $S$ is skew-circulant and skew-symmetric (i.e. $S^\T=-S$).
	{Moreover, it holds} 
	\[  S_{k,l}=\frac{1}{2}\int_{l-k}^{k-l} \frac{\sin (\pi t)}{\pi t} dt= \int_{0}^{k-l} \frac{\sin (\pi t)}{\pi t} dt= \sigma^{(-1)}_{k-l}-\frac{1}{2}, 
	\]
	and hence $S$ is a rank-one perturbation of $I^{(-1)}=\left[ \sigma^{(-1)}_{k-j}\right]_{k,j=1}^m$ according to
\begin{equation}\label{eq3.3}
	S=I^{(-1)}-\frac{1}{2} e_m e_m^\T \text{ with } e_m:=[1,1,\cdots,1]^\T.
\end{equation} 
So $\CP$ is a rank-$n$ perturbation of $\CA$ and it is anticipated to be an effective preconditioner of $\CA$.
	In \cite{abu2003properties}, it was shown that $S$ is  unitrary diagonalizable and all the eigenvalues of $S$ are simple. Furthermore, in \cite[Theorem 2.1]{han2014proof} it was shown that $(S+\epsilon e_m e_m^\T)$ (including $I^{(-1)}$ as a special case) is nonsingular for any $\epsilon>0$ and has all its  eigenvalues {lie} in the open right half-plane.  

\subsubsection{{Implementation details}}\label{sec3.1.1}	

	{Since} the  skew-symmetric matrix $S$ is diagonalizable and $SD$ is similar to  the skew-symmetric matrix $D^{\frac{1}{2}}SD^{\frac{1}{2}}$,  the matrix $SD$ is also diagonalizable.
	Let $SD=V \Sigma V^{-1}$ be its diagonalization (or eigen-decomposition).  Then we can factorize $\CP$ {as}
	\[
	\CP=(V\otimes I_n)\left(I_m\otimes I_n-\Sigma\otimes K\right)(V^{-1}\otimes I_n).
	\]
	Hence, for any vector ${\bm r}$  the preconditioning step ${\bm s}=\CP^{-1}{\bm r}$  can be computed by three steps:
	\begin{equation}\label{3step2}
		\begin{split}
			&\text{Step-(i)} ~~{\bm s}_1=\texttt{mat}({\bm r})(V^{-1})^{\T} \in\IR^{n\times m},\\
			&\text{Step-(ii)} ~~{\bm s}_{2}(:,j)=(I_n-\lambda_{j}K)^{-1} {\bm s}_{1}(:,j),\quad ~j=1,2,\dots,m,\\
			&\text{Step-(iii)} ~~{\bm s}=\texttt{vec}({\bm s}_2V^\T)\in\IR^{mn},\\
		\end{split}
	\end{equation} 
	where $\Sigma=\text{diag}(\lambda_{1},\dots,\lambda_{m})$ and ${\bm s}_{1,2}(:,j)$ denotes the $j$-th column of ${\bm s}_{1,2}$. In \eqref{3step2},  we have used the reshaping operations:  matrix-to-vector $\texttt{vec}$ and vector-to-matrix $\texttt{mat}$.  
	Clearly, the $m$   independent  linear systems in Step-(ii) can be computed in parallel.
	\begin{remark}
For the \pdd $\CP$ in \eqref{pre_P} there is a more convenient implementation of $\CP^{-1}{\bm r}$.  We can 
  factorize   $\CP$ as 
	$
	\CP=(D^{-\frac{1}{2}}\otimes I_n)\left(I_m\otimes I_n-D^{\frac{1}{2}}SD^{\frac{1}{2}}\otimes K \right)(D^{\frac{1}{2}}\otimes I_n)
	$ with    $D^{\frac{1}{2}}SD^{\frac{1}{2}}$ being   skew-symmetric  since $$
	\left(D^{\frac{1}{2}}SD^{\frac{1}{2}}\right)^\T=D^{\frac{1}{2}}S^\T D^{\frac{1}{2}}=-\left(D^{\frac{1}{2}}SD^{\frac{1}{2}}\right). 
	$$    
	This implies that  $D^{\frac{1}{2}}SD^{\frac{1}{2}}$ is a normal matrix and it {is} unitary diagonalizable: $D^{\frac{1}{2}}SD^{\frac{1}{2}}=W\Sigma W^*$ 
	with a unitary matrix $W$.  Therefore, we can replace $V^{-1}$ in \eqref{3step2} by $W^*D^{\frac{1}{2}}$, i.e., there is no need to invert  the eigenvector matrix.  	\end{remark}

One may wonder  \textit{why we do not directly  factorize $I^{(-1)}D$  and then solve the  all-at-once system $\CA{\bm y}_h={\bm b}_h$  by  the above \dgg procedure?}    This is  indeed the most convenient approach but unfortunately it does not work due to large roundoff errors arising from diagonalization of $I^{(-1)}D$.  
	According to the analysis in \cite{GH19,GW19a},  the roundoff errors for the diagonalization procedure \eqref{3step2} is proportional to the  condition number of the eigenvector matrix $V$ (denoted by   ${\rm Cond}_2(V)$). A very large ${\rm Cond}_2(V)$  leads to large roundoff error that will seriously pollute the accuracy of obtained numerical solution.   Let $U$ and $V$ be respectively the eigenvector matrix   of  $I^{(-1)}D$ and  $SD$.  	 In Figure \ref{fig3.1},  we compare  the condition number for $U$ and $V$ as a function of system size $M$.  Here, we use the   \texttt{eig} function in MATLAB  for both $U$ and $V$.  Clearly,  ${\rm Cond}_2(V)$ is much smaller than ${\rm Cond}_2(U)$ and the former seems increases only linearly.  The condition number  ${\rm Cond}_2(U)$   grows exponentially  as ${M}$ increases.   Our numerical simulations indicate  that  if we directly solve the all-at-once system $\CA$ by utilizing the diagonalization $I^{(-1)}D=U\Psi U^{-1}$, the unavoidable large roundoff error seriously pollutes the solution accuracy for $M\ge 64$ (see the last column of Table \ref{T3wave} in Section  \ref{sec5}).
		\begin{figure}[H]
		\centering 
		\includegraphics[width=1\textwidth]{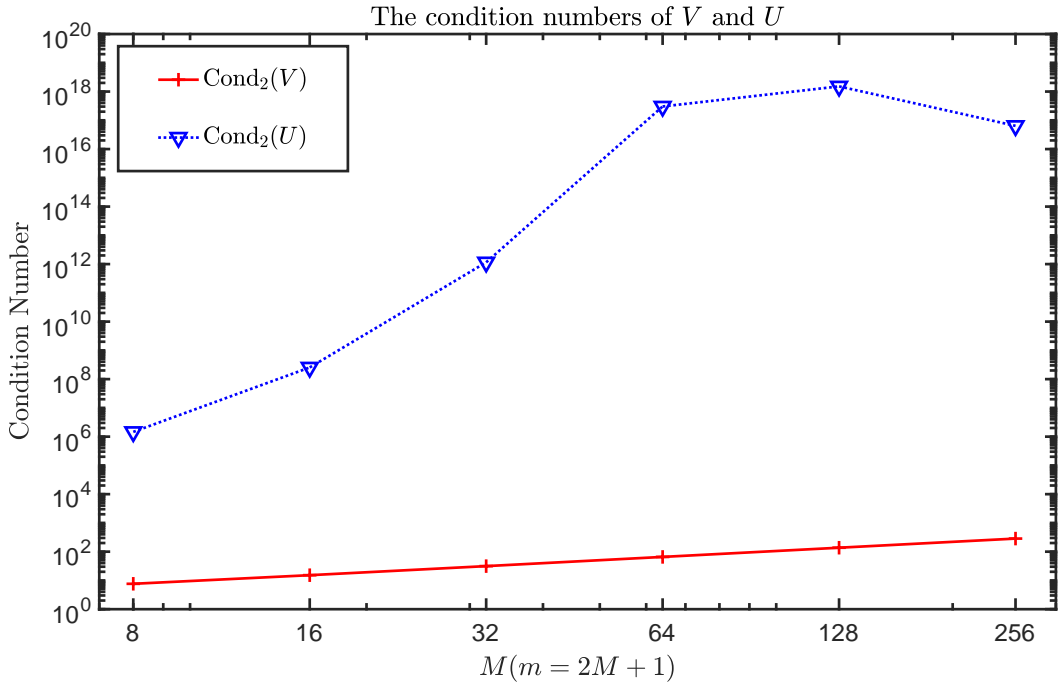}  
		\caption{The growth of ${\rm Cond}_2(U)$ and ${\rm Cond}_2(V)$ with $U$ and $V$ being the eigenvector matrix   of  $I^{(-1)}D$ and  $SD$, respectively.  The huge condition number (over $10^{15}$ for $M\ge 64$) of $U$ implies that we can not directly invert $\CA^{-1}{\bm b}_h$ by the diagonalization  technique, because the roundoff errors will seriously pollute the solution accuracy.  The mildly increasing condition number of $V$ will not contaminate the approximation accuracy.} \label{fig3.1}
	\end{figure}

{The following lemma presents an estimate  of ${\rm Cond}_2(V)$, but   is seems rather pessimistic compared to the numerical  result  shown in   Figure \ref{fig3.1}.  We mention  that  the eigenvector matrix $V$ is not unique because   $V\Phi$ is also an eigenvector matrix with any nonsingular diagonal matrix $\Phi$. Hence it entirely impossible to improve the following estimate with some suitable scaling matrix $\Phi$. We however do not further pursue this goal in the current paper.} 
  \begin{lemma} \label{condV}
	Let $V\Sigma V^{-1}$ be a \dgg of $SD$. It holds 
	${\rm Cond}_2(V)=O(e^{\sqrt{{M}}})$.
	\end{lemma}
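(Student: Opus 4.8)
The plan is to avoid diagonalizing $SD$ directly and instead pass through the normal matrix to which it is similar. First I would observe that $SD$ is similar, via the positive diagonal matrix $D^{1/2}$, to $D^{1/2}SD^{1/2}$, which is skew-symmetric and hence normal; write its unitary diagonalization as $D^{1/2}SD^{1/2}=W\Sigma W^*$ with $W$ unitary (and $\Sigma$ the diagonal matrix of the eigenvalues of $SD$, since similar matrices share their spectrum). Then
\[
SD=D^{-1/2}\left(D^{1/2}SD^{1/2}\right)D^{1/2}=\left(D^{-1/2}W\right)\Sigma\left(D^{-1/2}W\right)^{-1},
\]
so that $V:=D^{-1/2}W$ is a genuine eigenvector matrix of $SD$ realizing this $\Sigma$. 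Since $W$ is unitary and $D$ is a positive diagonal matrix,
\[
{\rm Cond}_2(V)\le\|D^{-1/2}\|_2\,\|W\|_2\,\|W^*\|_2\,\|D^{1/2}\|_2=\|D^{-1/2}\|_2\,\|D^{1/2}\|_2=\sqrt{{\rm Cond}_2(D)},
\]
which reduces the whole estimate to the condition number of the diagonal scaling $D$.

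Next I would evaluate $D$ on the Sinc grid. Recall $D=h\,\CD(1/\phi')$ with $1/\phi'(t)=t(T-t)/T$ and, since $(a,b)=(0,T)$, $t_j=\psi(jh)=Te^{jh}/(1+e^{jh})=T/(1+e^{-jh})$ for $j=-M,\dots,M$. A short computation gives $T-t_j=T/(1+e^{jh})$ and hence
\[
D_{jj}=h\,\frac{t_j(T-t_j)}{T}=\frac{hT}{(1+e^{jh})(1+e^{-jh})}=\frac{hT}{4\cosh^2(jh/2)}.
\]
Therefore the largest diagonal entry of the positive diagonal matrix $D$ is attained at $j=0$ and the smallest at $j=\pm M$, so ${\rm Cond}_2(D)=D_{00}/D_{\pm M,\pm M}=\cosh^2(Mh/2)$.

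Finally I would insert the prescribed step size $h=\sqrt{\pi d/(\alpha M)}$ from Theorems \ref{Thm2.1}--\ref{Thm2.2}, for which $Mh=\sqrt{\pi d M/\alpha}$, and conclude
\[
{\rm Cond}_2(V)\le\sqrt{{\rm Cond}_2(D)}=\cosh\!\left(\tfrac12\sqrt{\pi d M/\alpha}\right)\le e^{\frac12\sqrt{\pi d M/\alpha}}=O\!\left(e^{\sqrt{M}}\right),
\]
which is the claim. The argument is short, and the only points that need a little care are (i) verifying that $V=D^{-1/2}W$ really does diagonalize $SD$ with the asserted spectrum --- immediate from the similarity above --- and (ii) the elementary identity $t_j(T-t_j)/T=T/(4\cosh^2(jh/2))$ at the Sinc nodes, which is precisely where the $\cosh$-growth, and hence the $e^{\sqrt{M}}$ scaling forced by the exponentially resolving spacing $h\sim M^{-1/2}$, enters. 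I do not expect a genuine obstacle here; as the remark preceding the lemma already notes, this a priori bound is presumably far from tight for the eigenvector matrix returned by \texttt{eig} (cf.\ Figure \ref{fig3.1}), and the proof makes no attempt to sharpen it.
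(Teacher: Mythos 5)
Your proposal is correct and follows essentially the same route as the paper: pass to the skew-symmetric matrix $D^{1/2}SD^{1/2}=W\Sigma W^*$, take $V=D^{-1/2}W$ so that ${\rm Cond}_2(V)=\sqrt{{\rm Cond}_2(D)}$, and then evaluate the extreme diagonal entries of $D$ at the Sinc nodes with $h=\sqrt{\pi d/(\alpha M)}$ to obtain the $O(e^{\sqrt{M}})$ bound. The only cosmetic difference is that you express ${\rm Cond}_2(D)$ as $\cosh^2(Mh/2)$ while the paper writes it as $\frac{T^2/4}{t_{-M}(T-t_{-M})}$, which are the same quantity.
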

	\begin{proof}
	It follows from  $SD=V \Sigma V^{-1}$ and $D^{\frac{1}{2}}SD^{\frac{1}{2}}=W\Sigma W^*$ that $V=D^{-\frac{1}{2}}W$. Since $W$ is unitary with $W^{-1}=W^*$, there holds
	\[
	{\rm Cond}_2(V)=\|V\|_2 \|V^{-1}\|_2=\|D^{-\frac{1}{2}}W\|_2 \|W^*D^{\frac{1}{2}}\|_2=\|D^{-\frac{1}{2}}\|_2 \|D^{\frac{1}{2}}\|_2={\rm Cond}_2(D^{\frac{1}{2}}).
	\]
	{Recall that  $t_{-{M}}=\psi(-Mh)=\frac{Te^{-Mh}}{1+e^{-Mh}}$ and $h=\sqrt{\frac{\pi d}{\alpha M}}$} (chosen in Theorem \ref{Thm2.1}),  it holds that
	\[
	{\rm Cond}_2(D^{\frac{1}{2}})=\sqrt{{\rm Cond}_2(D)}=\sqrt{\frac{T^2/4}{t_{-M}(T-t_{-M})}}={e^{Mh/2}{(1+e^{-Mh})}/2}=O\left(e^{\sqrt{{M}}}\right).
	\] 
	Hence, ${\rm Cond}_2(V)={\rm Cond}_2(D^{\frac{1}{2}})=O\left(e^{\sqrt{{M}}}\right)$. 
	\end{proof} 

	\subsubsection{Spectrum  analysis  of $\CP^{-1}\CA$}\label{sec3.1.2}
    	We  now analyze the eigenvalues of $\CP^{-1}\CA$ for the case with constant coefficient. 	In general, a clustered spectrum of $\CP^{-1}\mathcal{A}$
	indicates the effectiveness of the preconditioner $\CP$ in practice,
	although the rigorous convergence rate of the preconditioned GMRES is not conclusively determined by the spectrum alone (see e.g.  \cite{greenbaum1996any,meurant2015role,wathen2015preconditioning}), especially for  {non-normal systems}.

	For simplicity, we assume that $K$ can be diagonalized as $K=Q \Gamma Q^{-1}$, which is {often} the case if $K$ {is the discrete matrix of self-adjoint elliptic  operator, e.g., the Laplacian}. With $K=Q \Gamma Q^{-1}$ we get
	the following factorization of $\CA$ and $\CP$ 
	\begin{equation*}
		\begin{split}
			&\mathcal{A}=(I_m\otimes Q)(I_m\otimes I_n- I^{(-1)} D\otimes \Gamma)(I_m\otimes Q^{-1}),\\
			&\mathcal{P}=(I_m\otimes Q)(I_m\otimes I_n- SD\otimes \Gamma)(I_m\otimes Q^{-1}).
		\end{split}
	\end{equation*} 	 	 
		The following lemma   will be used to  estimate  the spectrum of  $\CP^{-1}\mathcal{A}$. 
		 	 	\begin{lemma} \label{lemma_zmu}
		 	 	Let $z(\mu):=e_m^\T\left(\mu^{-1}D^{-1}+I^{(-1)}\right)^{-1} e_m$.  It holds $z(\mu)\in [0,2)$ for $\mu\geq0$.  
		 	 \end{lemma}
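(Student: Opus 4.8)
The plan is to collapse the scalar $z(\mu)$ to a simple rational expression in one quantity by exploiting the rank-one splitting \eqref{eq3.3}, and then to control that quantity through a positivity argument. First I would fix $\mu>0$ and use $I^{(-1)}=S+\tfrac12 e_m e_m^\T$ to write $\mu^{-1}D^{-1}+I^{(-1)}=B+\tfrac12\, e_m e_m^\T$ with $B:=\mu^{-1}D^{-1}+S$. Since $D$ is a positive diagonal matrix (its diagonal entries are $h\,t_j(T-t_j)/T>0$) and $S^\T=-S$, the symmetric part of $B$ is the positive definite matrix $\mu^{-1}D^{-1}$; in particular $B$, and likewise $B+\tfrac12 e_m e_m^\T=\mu^{-1}D^{-1}+I^{(-1)}$, are nonsingular. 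I would then apply the Sherman--Morrison formula to $B+\tfrac12 e_m e_m^\T$ and multiply by $e_m$ on both sides; the two resulting terms combine to the clean identity $z(\mu)=w/(1+\tfrac12 w)=2w/(2+w)$ with $w:=e_m^\T B^{-1}e_m$.

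The heart of the argument is the strict positivity of $w$. I would obtain it by setting $y:=B^{-1}e_m\neq 0$ and computing $w=e_m^\T B^{-1}e_m=(By)^\T y=y^\T B^\T y=y^\T(\mu^{-1}D^{-1}-S)y=\mu^{-1}\,y^\T D^{-1}y>0$, where $y^\T S y=0$ uses the skew-symmetry of $S$. Once $w>0$ is known, the elementary scalar map $t\mapsto 2t/(2+t)$ is increasing on $(0,\infty)$ with range $(0,2)$, hence $z(\mu)\in(0,2)$ for every $\mu>0$. For the endpoint $\mu=0$ I would instead use the factorization $\mu^{-1}D^{-1}+I^{(-1)}=\mu^{-1}D^{-1}(I_m+\mu D I^{(-1)})$, which gives $z(\mu)=\mu\, e_m^\T(I_m+\mu D I^{(-1)})^{-1}D e_m$; because the symmetric part of $D^{1/2}I^{(-1)}D^{1/2}$ equals $\tfrac12(D^{1/2}e_m)(D^{1/2}e_m)^\T$, which is positive semidefinite, every eigenvalue of $D I^{(-1)}$ has nonnegative real part, so $I_m+\mu D I^{(-1)}$ is invertible for all $\mu\ge0$ and $z$ extends continuously with $z(0)=0$. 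Together the two cases give $z(\mu)\in[0,2)$ for all $\mu\ge0$.

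Most of the above is routine bookkeeping once the decomposition and the Sherman--Morrison reduction are set up. The genuinely load-bearing step, and the one I expect to be the main obstacle, is the identity $w=\mu^{-1}\,y^\T D^{-1}y>0$: this is precisely what keeps $z(\mu)$ strictly below the limiting value $2$, and it rests entirely on $S$ being skew-symmetric (equivalently, on $I^{(-1)}$ differing from a skew-symmetric matrix by a symmetric rank-one term). A secondary, minor point is assigning a meaning to $z$ at $\mu=0$, which the factorization above handles.
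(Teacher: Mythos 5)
Your proposal is correct and follows essentially the same route as the paper's proof: the same splitting $\mu^{-1}D^{-1}+I^{(-1)}=\Phi_\mu+\tfrac12 e_m e_m^\T$ with $\Phi_\mu=\mu^{-1}D^{-1}+S$, the same Sherman--Morrison reduction to $z(\mu)=2w/(2+w)$, and the same load-bearing positivity $w=\mu^{-1}y^\T D^{-1}y>0$ via skew-symmetry of $S$. Your treatment of the endpoint $\mu=0$ (through the factorization $\mu^{-1}D^{-1}(I_m+\mu DI^{(-1)})$) is in fact a bit more careful than the paper's, which simply declares $z(0)=0$ trivially.
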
 
		 	 \begin{proof}
		 	 	{For $\mu=0$, the   result   holds   trivially  since}  $z(0)=0$. {Hence,   we will only} discuss the case   $\mu> 0$. 		
				 For  this case we first prove that  $z(\mu)$ is well-defined, i.e., $\mu^{-1}D^{-1}+I^{(-1)}$ is nonsingular. To this end, we let  $(\lambda,\xi)$ with $\|\xi\|_2=1$ be any (complex) eigenpair of  $\mu^{-1}D^{-1}+I^{(-1)}$, that is $\lambda \xi=(\mu^{-1}D^{-1}+I^{(-1)}) \xi=(\mu^{-1}D^{-1}+S+\frac{1}{2} e_me_m^\T)\xi$. Since 	 $\|\xi\|_2=1$, it holds 	 	
				 \[
		 	 	 \lambda=\lambda \|\xi\|_2^2=\lambda \xi^*\xi=\mu^{-1}\xi^*D^{-1}\xi +\xi^*S\xi+\frac{1}{2}  \xi^*e_me_m^\T \xi=
				 \mu^{-1}\xi^*D^{-1}\xi+\xi^*S\xi+\frac{1}{2}  \|e_m^\T\xi\|_2^2. 
		 	 	\]
By noticing that  $\xi^*D^{-1}\xi>0$  (due to {the fact that} $D=h\CD(1/\phi')$ is diagonal with positive entries) and  $\xi^*S\xi$ is purely imaginary (due to $S^*=S^\T=-S$), we have 
		 	 	$$
		 	 	 \Re(\lambda)= \mu^{-1}\xi^*D^{-1}\xi+\frac{1}{2}  \|e_m^\T\xi\|_2^2>0. 
		 	 	$$  
 Hence, $\mu^{-1}D^{-1}+I^{(-1)}$ is indeed nonsingular, i.e., $z(\mu)$ is well-defined. 			
 					
					Let $\Phi_{\mu}:=\mu^{-1}D^{-1}+S$. Then  following the proof for	 $\mu^{-1}D^{-1}+I^{(-1)}$ we can show that $\Phi_{\mu}$ is nonsingular for $\mu>0$ as well. 
		 	 	By using the Sherman-Worrison-Woodbury formula \cite{golub2012matrix}, 		 	 		\begin{equation} \label{SWWformula}
		 	 	\left(\mu^{-1}D^{-1}+I^{(-1)}\right)^{-1}
		 	 	=\left(\Phi_{\mu}+\frac{1}{2} e_me_m^\T\right)^{-1}
		 	 	=\Phi_{\mu}^{-1}-\frac{\Phi_{\mu}^{-1} e_m e_m^\T \Phi_{\mu}^{-1} }{2+e_m^\T \Phi_{\mu}^{-1} e_m},
		 	\end{equation}
		 	 	which implies
		 	 	\begin{equation} \label{zmuformula}
		 	 		z(\mu)=e_m^\T \Phi_{\mu}^{-1} e_m-\frac{(e_m^\T \Phi_{\mu}^{-1} e_m)^2}{2+e_m^\T \Phi_{\mu}^{-1} e_m}=\frac{2 e_m^\T \Phi_{\mu}^{-1} e_m}{2+e_m^\T \Phi_{\mu}^{-1} e_m} 
		 	 		=2-\frac{4}{2+e_m^\T \Phi_{\mu}^{-1} e_m}.
		 	 	\end{equation}
		 	 	
		 	  Let $\gamma(\mu):=e_m^\T \Phi_{\mu}^{-1} e_m$ and $v:=\Phi_{\mu}^{-1} e_m$. {Then it holds}  $v\ne 0$ and 	$e_m=\Phi_{\mu} v$, 
		 	 	which, using the fact that $S$ is skew-symmetric with  $v^\T S v=0$, leads to
		 	 	$$v^\T e_m=v^\T\left(\mu^{-1}D^{-1}+S\right) v=\mu^{-1} v^\T D^{-1} v.$$
		 	 	This together with  the fact that   $D=h\CD(1/\phi')$ is diagonal with positive entries and
		 	 	$$
		 	 	\gamma(\mu)=e_m^\T \Phi_{\mu}^{-1} e_m=e_m^\T v=(e_m^\T v)^\T=v^\T e_m
		 	 	$$
		 	 	gives  
		 	 	$ \gamma(\mu) =\mu^{-1} v^\T D^{-1} v>0$ for $\mu>0$.  
		 	 	In view of (\ref{zmuformula}) , we obtained the desired result
		 	 	$z(\mu)=2-\frac{4}{2+\gamma(\mu)}\subset [0,2)$.
		 	 \end{proof}
	 	 
		 	 As a numerical illustration of Lemma \ref{lemma_zmu}, {in}  Figure \ref{fig3.2} {we} plot the function $z(\mu)$ with different ${M}$ for   $\mu>0$. We see that $z(\mu)\to 2$ as $\mu\to\infty$ for a fixed $M$,  but how fast $z(\mu)$ approaches $2$ seems to highly depend on $M$. 
		 	 \begin{figure}[H]
		 	 	\centering 
		 	 	\includegraphics[height=2.6in,width=5.5in]{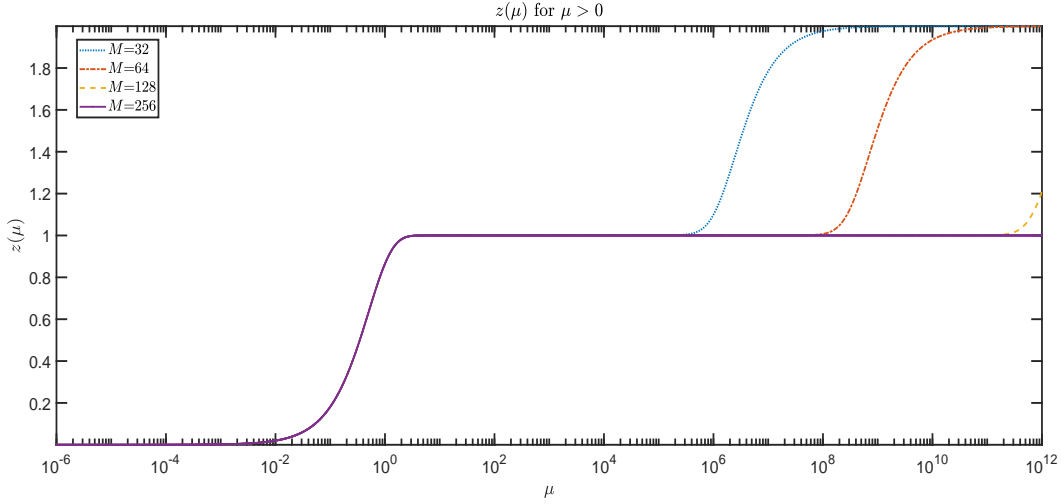}   		 	 	\caption{The function $z(\mu)$ for $\mu\in[10^{-6}, 10^{12}]$  with different ${M}$. Here we take $T=2, d=\pi/2$ and  $\alpha=1$. 
		 	 	} \label{fig3.2}
		 	 \end{figure}

		\begin{theorem} \label{ThmPA}
Suppose $K$ is diagonalizable with  negative spectrum $\sigma(K)\subset (-\infty, 0)$. Then,  $\CP^{-1}\mathcal{A}$ has   $n(m-1)$ unity eigenvalues and $n$ non-unity eigenvalues. {Moreover, it holds} 
		\[
		\sigma(\CP^{-1}\mathcal{A})\subset [1,  \infty).
		\]		 
	\end{theorem}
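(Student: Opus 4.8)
The plan is to diagonalize away the spatial variable, reduce to $n$ decoupled $m\times m$ problems, and treat each by a rank-one update argument. Using the assumed eigendecomposition $K=Q\Gamma Q^{-1}$ with $\Gamma={\rm diag}(\lambda_1,\dots,\lambda_n)$, $\lambda_k<0$, and the block factorizations of $\CA$ and $\CP$ displayed just before Lemma~\ref{lemma_zmu}, one sees that $\CP^{-1}\CA$ is similar to $(I_m\otimes I_n-SD\otimes\Gamma)^{-1}(I_m\otimes I_n-I^{(-1)}D\otimes\Gamma)$; applying the perfect-shuffle permutation that exchanges the two Kronecker factors turns this into the block-diagonal matrix ${\rm blockdiag}\,(M_{\lambda_k})_{k=1}^{n}$, where $M_{\lambda}:=(I_m-\lambda SD)^{-1}(I_m-\lambda I^{(-1)}D)$. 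All the inverses here make sense: $SD$ is similar to the real skew-symmetric matrix $D^{1/2}SD^{1/2}$, hence has purely imaginary spectrum, so $I_m-\lambda SD$ (and likewise $I_m\otimes I_n-SD\otimes\Gamma$, hence $\CP$) is nonsingular for real $\lambda$. It therefore suffices to show that for each fixed $\lambda\in(-\infty,0)$ the matrix $M_\lambda$ has $m-1$ eigenvalues equal to $1$ and one eigenvalue strictly larger than $1$.

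To do this I would invoke the rank-one identity $I^{(-1)}=S+\tfrac12 e_m e_m^\T$ from \eqref{eq3.3} together with the fact that $D$ is diagonal to write $I_m-\lambda I^{(-1)}D=(I_m-\lambda SD)-\tfrac{\lambda}{2}e_m(De_m)^\T$, whence $M_\lambda=I_m-\tfrac{\lambda}{2}(I_m-\lambda SD)^{-1}e_m(De_m)^\T$ is a rank-one perturbation of the identity. Since $I_m+uv^\T$ has eigenvalue $1$ with multiplicity $m-1$ and the single further eigenvalue $1+v^\T u$, the matrix $M_\lambda$ has $m-1$ unit eigenvalues and one extra eigenvalue $\eta_\lambda=1-\tfrac{\lambda}{2}\rho(\lambda)$ with $\rho(\lambda):=(De_m)^\T(I_m-\lambda SD)^{-1}e_m$. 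Because $\lambda<0$, it remains only to prove $\rho(\lambda)>0$, which simultaneously gives $\eta_\lambda>1$ and $\eta_\lambda\neq1$.

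The positivity of $\rho(\lambda)$ is the only real content of the proof, and is where I expect the (short) work to lie. Substituting $SD=D^{-1/2}(D^{1/2}SD^{1/2})D^{1/2}$ gives $\rho(\lambda)=w^\T(I_m-\lambda D^{1/2}SD^{1/2})^{-1}w$ with $w:=D^{1/2}e_m\neq0$; setting $v:=(I_m-\lambda D^{1/2}SD^{1/2})^{-1}w$ and using skew-symmetry of $D^{1/2}SD^{1/2}$ (so $v^\T D^{1/2}SD^{1/2}v=0$) yields $\rho(\lambda)=w^\T v=v^\T(I_m-\lambda D^{1/2}SD^{1/2})^\T v=\|v\|_2^2>0$. (Alternatively, $\rho(\lambda)=\mu^{-1}\gamma(\mu)$ with $\mu=-\lambda>0$ and $\gamma(\mu)=e_m^\T\Phi_\mu^{-1}e_m>0$ as already established inside the proof of Lemma~\ref{lemma_zmu}, which even identifies $\eta_\lambda=1+\tfrac12\gamma(-\lambda)=1+\tfrac{z(-\lambda)}{2-z(-\lambda)}$ and thus ties the non-unit eigenvalues to the function $z$ of Lemma~\ref{lemma_zmu}.) Assembling the $n$ diagonal blocks then gives precisely $n(m-1)$ unit eigenvalues and $n$ non-unit eigenvalues $\eta_{\lambda_1},\dots,\eta_{\lambda_n}$, each in $(1,\infty)$, so $\sigma(\CP^{-1}\CA)\subset[1,\infty)$. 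The only mild care needed is the bookkeeping of the permutation similarity and the observation that each $M_\lambda$ is actually diagonalizable (as $\eta_\lambda\neq1$), so the eigenvalue multiplicities are exact.
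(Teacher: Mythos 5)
Your proof is correct, and it follows the same overall strategy as the paper: diagonalize $K$ to decouple $\CP^{-1}\CA$ into the $m\times m$ blocks $P_\mu^{-1}A_\mu$ (your $M_\lambda$ with $\lambda=-\mu$), use the rank-one identity $I^{(-1)}=S+\tfrac12 e_me_m^\T$ to exhibit each block as a rank-one perturbation of $I_m$, and settle the sign of the scalar correction by the skew-symmetry of $D^{1/2}SD^{1/2}$. The one place you diverge is the middle step: the paper writes $P_\mu=A_\mu-\tfrac{\mu}{2}e_me_m^\T D$ and applies Sherman--Morrison--Woodbury to express $P_\mu^{-1}A_\mu$ through $A_\mu^{-1}$, so the non-unit eigenvalue comes out as $\tfrac{2}{2-z(\mu)}$ and the bound is delegated to Lemma~\ref{lemma_zmu}; you instead multiply out $(I_m-\lambda SD)^{-1}(I_m-\lambda I^{(-1)}D)$ directly, getting the non-unit eigenvalue as $1-\tfrac{\lambda}{2}\rho(\lambda)$ with $\rho(\lambda)=e_m^\T D(I_m-\lambda SD)^{-1}e_m$, and then prove $\rho(\lambda)>0$ by exactly the argument that appears inside the paper's proof of Lemma~\ref{lemma_zmu} (your $\rho(\lambda)=\mu^{-1}\gamma(\mu)$, as you note). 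What your route buys is a slightly leaner argument: no Sherman--Morrison--Woodbury, no need for the full $z(\mu)\in[0,2)$ statement (only the positivity $\gamma(\mu)>0$), and as a by-product the strict bound $\eta_\lambda>1$, i.e.\ the $n$ non-unity eigenvalues genuinely differ from $1$, which the paper's final inequality $\tfrac{2}{2-z(\mu)}\ge 1$ leaves slightly implicit; what the paper's route buys is the explicit formula $\tfrac{2}{2-z(\mu)}$ in terms of the function $z$, which is reused verbatim in Theorems~\ref{ThmPAheat_omega} and~\ref{ThmPAwave_omega} for the damped preconditioner $\CP(\omega)$. Your nonsingularity remark for $I_m-\lambda SD$ (purely imaginary spectrum of $SD$) and the diagonalizability/multiplicity bookkeeping are fine.
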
 
	\begin{proof}
		Let  $-\mu\in\sigma(K)$ with $\mu>0$ be an arbitrary eigenvalue of $K$. Then, it is clear that
		$$
		\sigma(\CP^{-1}\mathcal{A})=\sigma\left((I_m\otimes I_n- SD\otimes \Gamma)^{-1} (I_m\otimes I_n- I^{(-1)} D\otimes \Gamma)\right)={\bigcup}_{-\mu\in\sigma(K)}\sigma( P^{-1}_\mu A_\mu), 
		$$
		where 
		$P_\mu=I_m+ \mu S D=\mu \Phi_{\mu}D$, $A_\mu=I_m+ \mu I^{(-1)} D=\mu(\Phi_{\mu}+\frac{1}{2} e_me_m^\T)D$ and $\Phi_{\mu}= \mu^{-1}D^{-1}+S$.  (In the proof of Lemma \ref{lemma_zmu} we have already proved that $\Phi_{\mu}$ is nonsingular and thus $P_{\mu}$ is nonsingular as well.)  
		Since $S=I^{(-1)}-\frac{1}{2} e_me_m^\T$, by the Sherman-Worrison-Woodbury formula \cite{golub2012matrix}  we have 
		\begin{equation*}
		\begin{split}
		P_\mu^{-1} A_\mu=\left( A_\mu-\frac{\mu}{2}e_m e_m^\T D\right)^{-1}A_\mu
		 =I_m- A_\mu^{-1}e_m\left(-\frac{2}{\mu}+e_m^\T D A_\mu^{-1}e_m \right)^{-1} e_m^\T D,
		\end{split}
		\end{equation*}
		which is a rank-one   perturbation of the identity matrix. 
		Hence $P_\mu^{-1} A_\mu$ has $(m-1)$ unity eigenvalues and the remaining only  one non-unity eigenvalue is given by
	\begin{equation}\label{eq3.5}
		\lambda_{\max}(P_\mu^{-1} A_\mu)=1-\frac{ e_m^\T D A_\mu^{-1}e_m}{-\frac{2}{\mu}+e_m^\T D A_\mu^{-1}e_m}
		=\frac{2}{2-\mu e_m^\T D A_\mu^{-1}e_m}=\frac{2}{2-z(\mu)},
		\end{equation}
		where we have used the   fact
		\begin{equation} \label{zmu}
		\begin{split}
		\mu e_m^\T D A_\mu^{-1}e_m=
		\mu e_m^\T D(I_m+\mu I^{(-1)} D)^{-1}e_m=e_m^\T {\left(\mu^{-1}D^{-1}+I^{(-1)}\right)^{-1} e_m} =z(\mu).
		\end{split}
		\end{equation}
		From Lemma \ref{lemma_zmu},  we have $z(\mu)\in [0,2)$ for $\mu> 0$ and hence $\lambda_{\max}(P_\mu^{-1} A_\mu)=\frac{2}{2-z(\mu)}\ge 1$, which completes the proof. 
	\end{proof}
	 
		In Figure  \ref{fig1eigplot}, {we plot}   the computed {eigenvalues} of $\CA$ and $\CP^{-1}\CA$  {for the} 1D heat equation (cf. Example 1 in Section \ref{sec5}).
	From Figure \ref{fig1eigplot} we see that the eigenvalues of $\CP^{-1}\CA$ are real (if neglecting the roundoff errors) and highly clustered around 1 (within a bounded interval $[1.75, 2]$). {Our numerical results in Table \ref{T2heat_2D}  show that    the preconditioned GMRES   converges in only a few iterations}.
	 		  
	\begin{figure}[htp!]
		\centering		 
		\includegraphics[height=2.57in,width=5.65in]{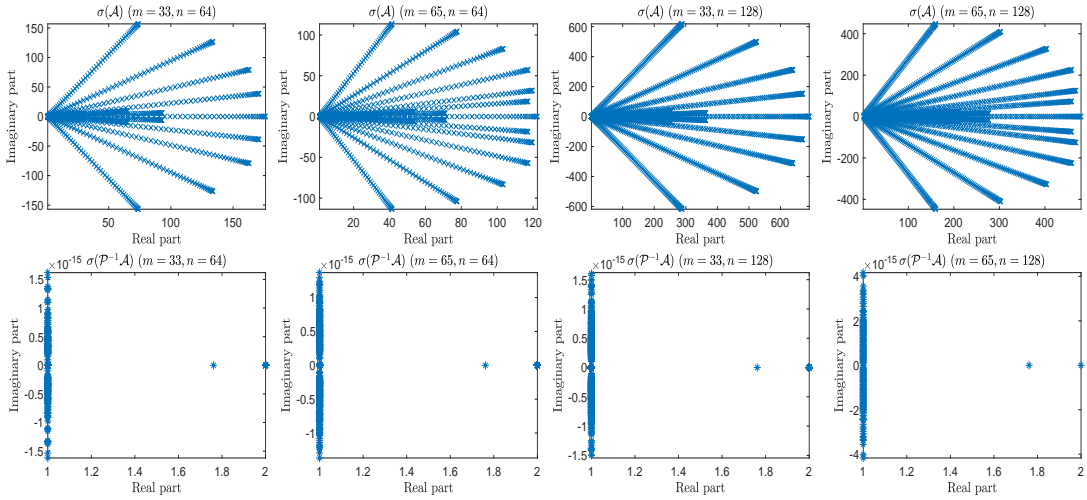} 
		\caption{The eigenvalues  of $\CA$ and $\CP^{-1}\CA$   {for Example 2 in Section \ref{sec5}, i.e., 1D heat equation with $n=64,128$ and $m=33,65$}. {Note that $m(n-1)$ eigenvalues  of $\CP^{-1}\CA$ are one and all eigenvalues are real.}} \label{fig1eigplot}
	\end{figure} 
 
	\begin{remark}  
		From \eqref{eq3.5} the maximum of $z(\mu)\in [0,2)$  controls the upper bound of the eigenvalues of $P_{\mu}^{-1}A_{\mu}$.
	From  Figure \ref{fig3.2},  we know that  $z(\mu)\to 2$  as $\mu$ gets larger and hence $\lambda_{\max}(P_{\mu}^{-1}A_{\mu})$ may become large as well.
		Numerically, due to the highly clustering of the eigenvalues of $\CP^{-1}\CA$ with real negative spectrum $\sigma(K)$,  
		a few very large eigenvalues do not seem to cause obvious degeneration  of convergence rate  for the preconditioned GMRES method.
	\end{remark} 
\subsection{{The    \pdd for  $\sigma(K)\subset\Ii\mathbb{R}$}}\label{sec3.2}
{
{We now consider the wave propagation problems, i.e.,   $\sigma(K)\subset\Ii\mathbb{R}$. In this case, the matrix  $\Phi_{\mu}$  defined in Lemma \ref{lemma_zmu} (i.e., $\Phi_{\mu}:=\mu^{-1}D^{-1}+S$) could be singular  for some special $\mu$ (the singularity of $\Phi_{\mu}$ implies singularity of $P_\mu=\mu \Phi_{\mu}D$ or equivalently $\CP$).  In fact, for any  purely imaginary eigenvalue  $\Ii\tilde{\mu}$ of $S^{\frac{1}{2}}DS^{\frac{1}{2}}$ we can choose   $\mu=\pm \Ii\tilde{\mu}^{-1}$ such that   the matrix $\Phi_{\mu}$ is singular.    Hence, generally speaking the \pdd $\CP$ proposed in Section \ref{sec4.1} is not applicable to wave propagation problems.

The above discussion  motivates us to propose and study an improved \pdd which actually works very well  for both the parabolic and  wave equations. The new \pdd is a generalized version of $\CP$   parameterized  by  a small parameter $\omega\in (0,1)$ which is used to  control the norm (or magnitude) of the rank-one perturbation term in constructing  $\CP$}:
\begin{equation} \label{pre_P_omega}
	\CP(\omega)= I_m\otimes I_n- (S(\omega) D)\otimes K,
	\end{equation}
	where $S(\omega)$ is defined as a damped rank-one perturbation of $I^{(-1)}$:
	\begin{equation}\label{S-omega}
		S({\omega})=I^{(-1)}-\frac{\omega}{2} e_m e_m^\T=S+\frac{1-\omega}{2} e_m e_m^\T, \quad \omega\in (0,1).
	\end{equation}
	When $\omega$ is small (e.g. $\omega=0.01$) the preconditioner $\CP(\omega)$ is expected to perform better than $\CP=\CP(1)$ in view of $\lim_{\omega\to 0} \CP(\omega)=\CA$.
 	Suppose  $S(\omega) D$ is diagonalizable with $S(\omega) D=V_\omega\Sigma_\omega V^{-1}_\omega$, we expect that the condition number ${\rm Cond}_2(V_\omega)$     ranges from ${\rm Cond}_2(V)$ to ${\rm Cond}_2(U)$, where  {$U$ is the eigenvector matrix of $I^{(-1)}D$. With the \dgg of $S(\omega)D$, the computation   of    $\CP^{-1}(\omega){\bm r}$  is the same as  the 3-step procedure \eqref{3step2} and we omit the presentation.    }    
	
The growth of ${\rm Cond}_2(V_\omega)$ as $M$ increases  is illustrated in Figure \ref{CondV_omega}, where  
${\rm Cond}_2(V_\omega)$ seems to be proportional to $\frac{1}{\omega}{\rm Cond}_2(V)$. {(Unlike the \pdd $\CP$ proposed in Section \ref{sec4.1}, the analysis of ${\rm Cond}_2(V_\omega)$|even though a rough estimate as given by Lemma \ref{condV} for $\CP$,  is  extremely difficult.)  This implies that  the roundoff error arising from the \dgg procedure would be well controlled by   choosing a moderate  $\omega$.  The remained question is how the parameter $\omega$ influences the spectrum of the preconditioned matrix $\CP^{-1}(\omega)\CA$}. 
	\begin{figure}[htp!]
		\centering  
		\includegraphics[width=1\textwidth]{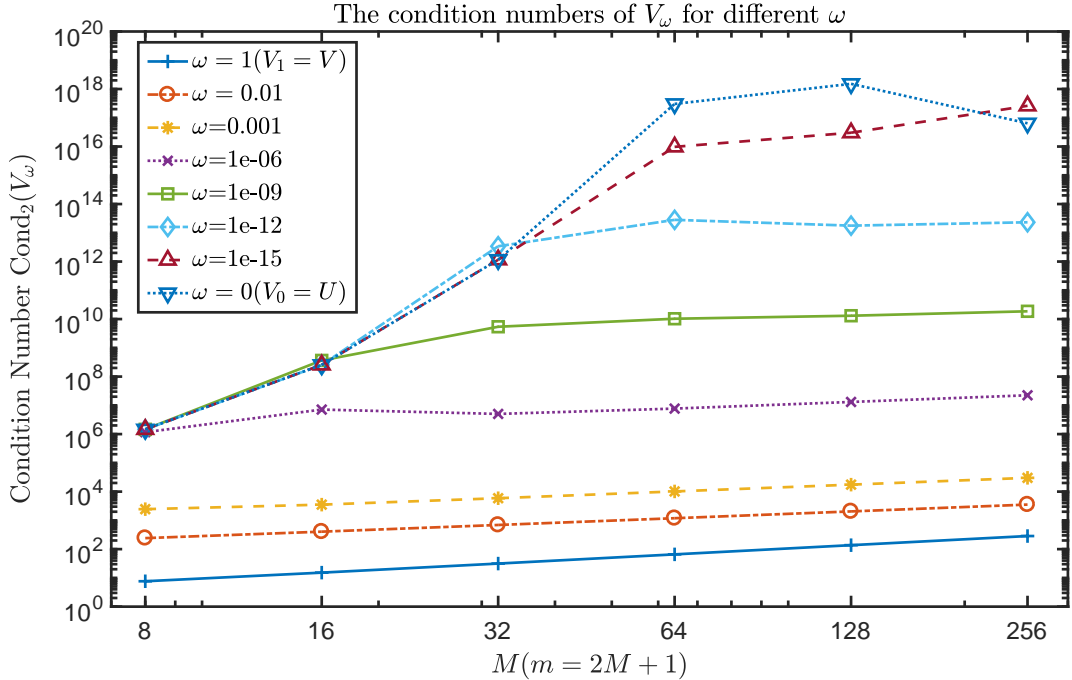}  
		\caption{The growth of the condition number ${\rm Cond}_2(V_\omega)$ as a function of $M$ for different $\omega$ values.} \label{CondV_omega}
	\end{figure} 
 
\begin{theorem}[{the case $\sigma(K)\subset\mathbb{R}^-$}] \label{ThmPAheat_omega}
	{Let} $K$  be a diagonalizable  matrix with real negative eigenvalues.  Then $\CP^{-1}(\omega)\mathcal{A}$ with $\omega\in (0,1)$ has only $n$ non-unity eigenvalues and   
		\[
	\sigma(\CP^{-1}(\omega)\mathcal{A})\in \left[1,\frac{1}{1-\omega}\right). 
	\]
\end{theorem}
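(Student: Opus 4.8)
The plan is to mimic the proof of Theorem \ref{ThmPA}, tracking the extra parameter $\omega$. First I would reduce to a scalar problem: since $K = Q\Gamma Q^{-1}$ is diagonalizable, the factorizations of $\CA$ and $\CP(\omega)$ through $(I_m\otimes Q)$ give
\[
\sigma(\CP^{-1}(\omega)\CA) = \bigcup_{-\mu\in\sigma(K)} \sigma\bigl(P_\mu(\omega)^{-1} A_\mu\bigr),
\]
where $A_\mu = I_m + \mu I^{(-1)} D = \mu(\Phi_\mu + \tfrac12 e_m e_m^\T)D$ exactly as before, and $P_\mu(\omega) = I_m + \mu S(\omega) D = \mu(\Phi_\mu + \tfrac{1-\omega}{2} e_m e_m^\T)D$. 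Note $\mu>0$ here since $\sigma(K)\subset(-\infty,0)$, so $\Phi_\mu = \mu^{-1}D^{-1}+S$ is nonsingular by the argument already given in the proof of Lemma \ref{lemma_zmu} (its eigenvalues have positive real part because $D^{-1}$ is positive diagonal and $S$ is skew-symmetric); adding the rank-one positive semidefinite term $\tfrac{1-\omega}{2}e_m e_m^\T$ keeps the real parts positive, so $P_\mu(\omega)$ is nonsingular.

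Next I would apply the Sherman--Morrison--Woodbury formula to $A_\mu = P_\mu(\omega) + \tfrac{\omega\mu}{2} e_m e_m^\T D$, writing $P_\mu(\omega)^{-1} A_\mu = I_m + \tfrac{\omega\mu}{2} P_\mu(\omega)^{-1} e_m e_m^\T D$, which is a rank-one perturbation of the identity — hence $n(m-1)$ unity eigenvalues in total and $n$ non-unity ones, as claimed. The single non-unity eigenvalue of the $\mu$-block is
\[
\lambda(\mu,\omega) = 1 + \frac{\omega\mu}{2}\, e_m^\T D P_\mu(\omega)^{-1} e_m.
\]
The task is then to compute $e_m^\T D P_\mu(\omega)^{-1} e_m$ in terms of the function $z(\mu)$ from Lemma \ref{lemma_zmu}. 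Using $P_\mu(\omega) = \mu(\Phi_\mu + \tfrac{1-\omega}{2}e_m e_m^\T)D$ and a second application of Sherman--Morrison to invert $\Phi_\mu + \tfrac{1-\omega}{2}e_m e_m^\T$, together with the identity $\mu e_m^\T D(I_m+\mu I^{(-1)}D)^{-1}e_m = z(\mu)$ already established in \eqref{zmu} (and the analogous formula with $S(\omega)$), one expresses everything through $\gamma(\mu) := e_m^\T\Phi_\mu^{-1}e_m > 0$. Concretely, $\mu e_m^\T D P_\mu(\omega)^{-1}e_m = \dfrac{\gamma(\mu)}{1+\tfrac{1-\omega}{2}\gamma(\mu)}$, so that
\[
\lambda(\mu,\omega) = 1 + \frac{\omega}{2}\cdot\frac{\gamma(\mu)}{1+\frac{1-\omega}{2}\gamma(\mu)} = \frac{1+\frac12\gamma(\mu)}{1+\frac{1-\omega}{2}\gamma(\mu)}.
\]
Since $\gamma(\mu)\in(0,\infty)$ ranges over all positive reals as $\mu$ varies (consistent with $z(\mu)=2-\tfrac{4}{2+\gamma(\mu)}\in[0,2)$), the map $s\mapsto \frac{1+s/2}{1+(1-\omega)s/2}$ on $s\in[0,\infty)$ is monotone with value $1$ at $s=0$ and supremum $\frac{1}{1-\omega}$ as $s\to\infty$, giving $\lambda(\mu,\omega)\in[1,\frac{1}{1-\omega})$ for every $\mu>0$; taking the union over $-\mu\in\sigma(K)$ yields the stated inclusion.

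I expect the main obstacle to be the bookkeeping in the nested Sherman--Morrison step: one must be careful that $1 + \tfrac{1-\omega}{2}\gamma(\mu) \ne 0$ (immediate since $\gamma(\mu)>0$ and $\omega<1$) so the inversion is legitimate, and that the algebra reducing $e_m^\T D P_\mu(\omega)^{-1}e_m$ to a rational function of $\gamma(\mu)$ is carried out consistently with the sign conventions ($S^\T=-S$, $D$ positive diagonal). Everything else — the rank-one structure, the counting of unity eigenvalues, and the monotonicity of the final scalar function — is routine once the scalar reduction is in place. A minor point worth stating explicitly is that the endpoint $\frac{1}{1-\omega}$ is not attained (the spectrum of $K$ is finite, so $\mu$, hence $\gamma(\mu)$, is bounded), which justifies the half-open interval.
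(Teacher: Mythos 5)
Your proposal is correct and follows essentially the same route as the paper: the same spectral reduction via $K=Q\Gamma Q^{-1}$, the same rank-one Sherman--Morrison structure giving $n(m-1)$ unity eigenvalues, and the same positivity ingredient $\gamma(\mu)=e_m^\T\Phi_\mu^{-1}e_m>0$ from Lemma \ref{lemma_zmu}. The only cosmetic difference is that you apply Sherman--Morrison on the other side and express the non-unity eigenvalue as $\frac{1+\gamma(\mu)/2}{1+(1-\omega)\gamma(\mu)/2}$, which is algebraically identical to the paper's $\frac{2}{2-\omega z(\mu)}$ with $z(\mu)=2-\frac{4}{2+\gamma(\mu)}$, so the bound $\left[1,\frac{1}{1-\omega}\right)$ follows just as in the paper.
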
 
\begin{proof} 
	Following the proof arguments of Theorem \ref{ThmPA},
	it is clear that
	$$
	\sigma(\CP^{-1}(\omega)\mathcal{A})= {\bigcup}_{-\mu\in\sigma(K)}\sigma( P^{-1}_\mu(\omega) A_\mu), 
	$$
	where 
	$P_\mu(\omega)=I_m+ \mu S(\omega) D=\mu(\mu^{-1}D^{-1}+  S(\omega)) D$  is nonsingular for $\mu>0$ since $S(\omega)$ is nonsingular with all eigenvalues located in the open right half-plane.  
	Since $S(\omega)=I^{(-1)}-\frac{\omega}{2} e_me_m^\T$, by the Sherman-Worrison-Woodbury formula \cite{golub2012matrix}  we have 
	\begin{equation*}
		\begin{split}
			P_\mu^{-1}(\omega) A_\mu=\left( A_\mu-\frac{\mu\omega}{2}e_m e_m^\T D\right)^{-1}A_\mu
			=I_m- A_\mu^{-1}e_m\left(-\frac{2}{\mu\omega}+e_m^\T D A_\mu^{-1}e_m \right)^{-1} e_m^\T D,
		\end{split}
	\end{equation*}
	which implies {that} $P_\mu^{-1}(\omega) A_\mu$ has $(m-1)$ unity eigenvalues and only one non-unity eigenvalue 
	\begin{equation}\label{eq3.5_omega2}
		\lambda(P_\mu^{-1}(\omega) A_\mu)=1-\frac{ e_m^\T D A_\mu^{-1}e_m}{-\frac{2}{\mu\omega}+e_m^\T D A_\mu^{-1}e_m}
		=\frac{2}{2-\omega\mu e_m^\T D A_\mu^{-1}e_m}=:\frac{2}{2-\omega z(\mu)},
	\end{equation}
	where $z(\mu)$ is the same function defined by Lemma \ref{lemma_zmu} satisfying  $z(\mu)\in [0,2)$ for $\mu>0$.
	Hence 
	\bean
	\lambda (P_\mu^{-1}(\omega) A_\mu)=\frac{2}{2-\omega z(\mu)}\in \left[1,\frac{1}{1-\omega}\right),
	\eean
	where together with other unity eigenvalues completes the proof.
\end{proof}

For the case that the eigenvalues of $K$ are purely imaginary, we have the following uniform bounds for the spectrum of $\CP^{-1}(\omega)\mathcal{A}$.
	\begin{theorem}[{the case $\sigma(K)\subset\Ii\mathbb{R}$}] \label{ThmPAwave_omega}
		{Let} $K$  be a diagonalizable  matrix with purely imaginary spectrum $\sigma(K)$. {Then} $\CP^{-1}(\omega)\mathcal{A}$  with $\omega\in (0,1)$ has only $n$ non-unity eigenvalues and 
		\[
		\sigma(\CP^{-1}(\omega)\mathcal{A})\subset \mathbb{A}_\omega:=\left\{ 
		z\in\IC:\ \frac{\omega }{(2-\omega)}\le \left|z-\frac{2}{2-\omega} \right| 
		\le  \frac{\omega }{(2-\omega)(1-\omega)}
		\right\}. 
		\]
	\end{theorem}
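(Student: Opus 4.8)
The plan is to mimic the block‑by‑block argument of Theorem~\ref{ThmPAheat_omega}, changing only the real‑interval estimate of $z(\mu)$ into a circular one suited to purely imaginary $\mu$. Since $K$ is diagonalizable one still has
\[
\sigma(\CP^{-1}(\omega)\mathcal{A})={\bigcup}_{-\mu\in\sigma(K)}\sigma\big(P_\mu^{-1}(\omega)A_\mu\big),\qquad P_\mu(\omega)=I_m+\mu S(\omega)D,\quad A_\mu=I_m+\mu I^{(-1)}D,
\]
except that now every $\mu$ lies on the imaginary axis. The first task is to check that each $P_\mu(\omega)$ (hence $\CP(\omega)$) is nonsingular, so that $\CP^{-1}(\omega)\mathcal{A}$ is well defined and the eigenvalue count is as claimed. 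I would do this by noting that $S(\omega)D$ is similar to $D^{1/2}S(\omega)D^{1/2}=D^{1/2}SD^{1/2}+\tfrac{1-\omega}{2}(D^{1/2}e_m)(D^{1/2}e_m)^{\T}$, whose eigenvalues have nonnegative real part (the real skew‑symmetric part contributes a purely imaginary Rayleigh quotient), so that the purely imaginary numbers $1/\mu$ cannot be eigenvalues of $S(\omega)D$ provided $D^{1/2}e_m$ has nonzero component along the relevant eigenvectors of $D^{1/2}SD^{1/2}$. This nondegeneracy, together with the bookkeeping in the exceptional case where some $\Phi_\mu$ is singular, is the most delicate point and I expect it to be the main obstacle.

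Granting this, the Sherman--Morrison--Woodbury manipulation in the proof of Theorem~\ref{ThmPAheat_omega} is purely algebraic and still shows that $P_\mu^{-1}(\omega)A_\mu$ is a rank‑one perturbation of $I_m$ with $m-1$ unity eigenvalues and single non‑unity eigenvalue
\[
\lambda\big(P_\mu^{-1}(\omega)A_\mu\big)=\frac{2}{2-\omega z(\mu)},\qquad z(\mu)=e_m^{\T}\big(\mu^{-1}D^{-1}+I^{(-1)}\big)^{-1}e_m .
\]
Writing $I^{(-1)}=\Phi_\mu+\tfrac12 e_me_m^{\T}$ with $\Phi_\mu=\mu^{-1}D^{-1}+S$ and applying Sherman--Morrison--Woodbury exactly as in the proof of Lemma~\ref{lemma_zmu} gives $z(\mu)=2\gamma(\mu)/(2+\gamma(\mu))$ with $\gamma(\mu)=e_m^{\T}\Phi_\mu^{-1}e_m$. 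The one new observation is this: for $\mu\in\Ii\mathbb{R}$ the scalar $\mu^{-1}$ is purely imaginary, $D^{-1}$ is real symmetric and $S$ is real skew‑symmetric, so $\Phi_\mu$ is \emph{skew‑Hermitian}; hence $\Phi_\mu^{-1}$ is skew‑Hermitian as well and, $e_m$ being real, the scalar $\gamma(\mu)=e_m^{\T}\Phi_\mu^{-1}e_m$ is \emph{purely imaginary}. Setting $\gamma(\mu)=\Ii s$ with $s\in\mathbb{R}$ we get $|z(\mu)-1|=|(\Ii s-2)/(\Ii s+2)|=1$, i.e. every $z(\mu)$ lies on the circle $\{w\in\IC:|w-1|=1\}$ (the value $w=2$ being attained only in the limit where $\Phi_\mu$ degenerates).

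It remains to transport this circle through the M\"obius map $z\mapsto 2/(2-\omega z)$ and enclose the image in $\mathbb{A}_\omega$. Because $\omega\in(0,1)$, as $z$ runs over $|z-1|=1$ the point $2-\omega z$ runs over the circle of radius $\omega$ about the real number $2-\omega>\omega>0$, which does not surround the origin; by the standard formula for the image of a circle under $w\mapsto 1/w$, inverting and rescaling by $2$ maps it onto the circle $\mathcal{C}_\omega$ centered at $\frac{2-\omega}{2(1-\omega)}$ of radius $\frac{\omega}{2(1-\omega)}$. One checks directly that the unity eigenvalue $\lambda=1$ also lies on $\mathcal{C}_\omega$, so the sharp statement is $\sigma(\CP^{-1}(\omega)\mathcal{A})\subset\mathcal{C}_\omega$. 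Finally, the reference point $\frac{2}{2-\omega}$ sits at distance $\frac{\omega^2}{2(1-\omega)(2-\omega)}$ from the center of $\mathcal{C}_\omega$, and combining this with the radius $\frac{\omega}{2(1-\omega)}$ shows the distance of any point of $\mathcal{C}_\omega$ to $\frac{2}{2-\omega}$ ranges exactly between $\frac{\omega}{2-\omega}$ and $\frac{\omega}{(1-\omega)(2-\omega)}$, i.e. $\mathcal{C}_\omega\subset\mathbb{A}_\omega$, which finishes the proof. Apart from the nonsingularity point flagged above, the only real work is the skew‑Hermitian observation (short once spotted) and the elementary circle‑inversion bookkeeping.
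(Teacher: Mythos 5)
Your overall route is the paper's route: reduce to the blocks $P_\mu^{-1}(\omega)A_\mu$, use Sherman--Morrison--Woodbury to exhibit a rank-one perturbation of $I_m$ whose single non-unity eigenvalue is $2/(2-\omega z(\mu))$, show that $z(\mu)$ lies on the circle $|z-1|=1$ when $\mu\in\Ii\IR$, and push that circle through the M\"obius map. Your final step is correct and in fact slightly sharper than the paper's: you identify the image as the circle centered at $\tfrac{2-\omega}{2(1-\omega)}$ of radius $\tfrac{\omega}{2(1-\omega)}$ (which contains the point $1$ and sits inside $\mathbb{A}_\omega$), whereas the paper only bounds $\left|\tfrac{2}{2-\omega z}-\tfrac{2}{2-\omega}\right|$ between $\tfrac{\omega}{2-\omega}$ and $\tfrac{\omega}{(2-\omega)(1-\omega)}$ by the triangle inequality.

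There are, however, two genuine gaps. First, the step you flag yourself---invertibility of $P_\mu(\omega)$ for $\mu\in\Ii\IR$---is left as an unproved nondegeneracy assumption: your Rayleigh-quotient argument only gives $\Re\lambda\ge 0$ for the eigenvalues of $D^{1/2}S(\omega)D^{1/2}$, and you do not rule out a purely imaginary eigenvalue whose eigenvector is orthogonal to $D^{1/2}e_m$. The paper does supply an argument at this point: $S(\omega)=S+\tfrac{1-\omega}{2}e_me_m^\T$ is diagonalizable (a rank-one update of the diagonalizable skew-symmetric $S$, by a cited perturbation theorem) with all eigenvalues in the open right half-plane (by the cited result of Han et al.), and it then transfers this to $D^{1/2}S(\omega)D^{1/2}$ via a quadratic-form estimate; you are right that this transfer is exactly the delicate spot, but a complete proof must settle it and yours leaves it open. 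Second, your derivation of $|z(\mu)-1|=1$ runs through $\gamma(\mu)=e_m^\T\Phi_\mu^{-1}e_m$ and thus presupposes that $\Phi_\mu=\mu^{-1}D^{-1}+S$ is invertible; for purely imaginary $\mu$ this is precisely what can fail (it is the very reason $\CP(\omega)$ replaces $\CP$ in this regime), so at those exceptional $\mu$ the formula $z=2\gamma/(2+\gamma)$ is unavailable and ``the limit where $\Phi_\mu$ degenerates'' is not yet an argument. The paper sidesteps $\Phi_\mu^{-1}$ entirely: it sets $w=(\mu^{-1}D^{-1}+I^{(-1)})^{-1}e_m$, multiplies the defining identity by $w^*$, and matches real parts, using only that $\mu^{-1}w^*D^{-1}w$ and $w^*Sw$ are purely imaginary. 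Adopting that direct computation (or a continuity argument over the finitely many bad $\mu$) closes this second gap; the first one is the substantive missing piece.
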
 
	\begin{proof} 
		  {We first claim that $P_\mu(\omega)=I_m+ \mu S(\omega) D$  is nonsingular for  $\mu\in{\rm i}\mathbb{R}$, which implies that the \pdd $\CP(\omega)$ is invertible.  If $\mu=0$, the claim holds trivially. Hence we only have to consider  $\mu\ne 0$.  Since $\mu\in\Ii\mathbb{R}$, it is sufficient to prove that any eigenvalue of  $S(\omega)D$ has non-zero real part.  This is further equivalent to proving that any  eigenvalue of $D^{\frac{1}{2}}S(\omega)D^{\frac{1}{2}}$ has non-zero real part, since $S(\omega)D$ is similar to   $D^{\frac{1}{2}}S(\omega)D^{\frac{1}{2}}$. 	 Since  $S$ is a kew-symmetric matrix (and thus $S$ is diagonalizable) and  $S(\omega)=S+\frac{1-\omega}{2}e_m^\T e_m$  is a rank-one perturbation of $S$, from \cite[Theorem 2.3]{2011Eigenvalue} we know that $S(\omega)$ is  diagonalizable for any $\omega\in(0, 1)$.  This implies that the eigenvectors of $S(\omega)$, denoted by $\{v_1, v_2,\dots, v_m\}$, forms a basis of $\mathbb{C}^{m}$.   Without loss of generality, we assume that these eigenvectors are  orthonormal basis of $\mathbb{C}^m$ (after the Gram-Schmidt orthogonalization and normalization).  Moreover,  we assume that the eigenvalue associated with $v_j$ is $\lambda_j$. From  \cite[Theorem 2.1]{han2014proof} we know  that these eigenvalues   lie in the open right half-plane, that is $\Re(\lambda_j)>0$ for $j=1,2,\dots, m$.   Hence, for any vector $z\in\mathbb{C}^m$ expressed as $z=c_1v_1+c_2v_2+\cdots+c_mv_m$ it holds that
		    \begin{equation}\label{ReS}
		    \Re(z^*S(\omega)z)=\Re(\lambda_1|c_1|^2+\lambda_2|c_2|^2+\cdots+\lambda_m|c_m|^2)>0.  
		    \end{equation}
		 Let $(\lambda, \xi)$ is an eigenpair of $D^{\frac{1}{2}}S(\omega)D^{\frac{1}{2}}$, i.e., 
		  $D^{\frac{1}{2}}S(\omega)D^{\frac{1}{2}}\xi=\lambda \xi$, with $\xi\neq0$. 
		   we have
		   $$
		   \xi^*D^{\frac{1}{2}}S(\omega)D^{\frac{1}{2}}\xi=\lambda\|\xi\|_2^2.
		   $$ Now, by letting $z=D^{\frac{1}{2}}\xi$ in \eqref{ReS} it follows that  $\Re(\xi^*D^{\frac{1}{2}}S(\omega)D^{\frac{1}{2}}\xi)>0$ and therefore $\lambda$ has non-zero real part.}

		Let   $z(\mu)=e_m^\T\left(\mu^{-1}D^{-1}+I^{(-1)}\right)^{-1} e_m$. Using the same notations in Theorem \ref{ThmPAheat_omega},
		 we have
		$$
		\sigma(\CP^{-1}(\omega)\mathcal{A})= {\bigcup}_{-\mu\in\sigma(K)}\sigma( P^{-1}_\mu(\omega) A_\mu)
		=\underbrace{\left\{1,1,\cdots,1\right\}}_{n(m-1)} {\bigcup} \left\{\frac{2}{2-\omega z(\mu)}: -\mu\in\sigma(K)\right\}, 
		$$
{We next prove the following relationship for $\mu\in\Ii\mathbb{R}$
\begin{equation}\label{eq3.13}
 |z(\mu)-1|=1. 
\end{equation}
 Let $w:=\left(\mu^{-1}D^{-1}+I^{(-1)}\right)^{-1} e_m$, which gives
		 	 	 $\left(\mu^{-1}D^{-1}+S+\frac{1}{2}e_m e_m^\T \right) w=e_m$.
		 	 	 Multiplying from left by the conjugate transpose $w^*$, we get
		 	 	 \[
		 	 	  \mu^{-1}w^*D^{-1}w+w^*Sw+\frac{1}{2}w^*e_m e_m^\T w =w^*e_m.
		 	 	 \]
		 	 	Notice that $z(\mu)=e_m^\T w=(w^*e_m)^*$ , we get
		 	 	\[
		 	 	\underbrace{\mu^{-1}w^*D^{-1}w+w^*Sw}_{=:\rho \Ii}+\frac{1}{2}z^*(\mu) z(\mu) =z^*(\mu),
		 	 	\]
		 	 	where  $(\mu^{-1}w^*D^{-1}w+w^*Sw)=\rho \Ii$ with $\rho\in\IR$ holds because   $\mu\in\Ii\IR$ and   $S$ is skew-symmetric. Let $z(\mu)=\Re(z(\mu))+\Ii\Im(z(\mu))$. Then the above equation gives
		 	 	\[
		 	 	\rho\Ii+\frac{1}{2} \Re(z(\mu))^2+\frac{1}{2}\Im(z(\mu))^2=\Re(z(\mu))-\Ii\Im(z(\mu)),
		 	 	\]
		 	 	which leads to (by matching  the real part) $\Re(z(\mu))^2+\Im(z(\mu))^2=2\Re(z(\mu))$, 
		 	 	that is
		 	 	\[
		 	 	(\Re(z(\mu))-1)^2+\Im(z(\mu))^2=1,
		 	 	\]
		 	 	i.e., the relationship \eqref{eq3.13} holds.  }

{By \eqref{eq3.13},  for any $\mu\in\Ii\mathbb{R}$ we have}    $z(\mu)-1=e^{i\theta}$ with some $\theta\in [0,2\pi]$.
		Hence 
		\bean
		\frac{\omega }{(2-\omega)}\le \left|\frac{2}{2-\omega z(\mu)}-\frac{2}{2-\omega} \right|=
		\frac{|2\omega e^{\i\theta}|}{(2-\omega)|((2-\omega)-\omega   e^{i\theta})|}
		\le  \frac{\omega }{(2-\omega)(1-\omega)},
		\eean
		where we have used $|e^{i\theta}|=1$ and the triangle inequality
		\[
		2-2\omega=(2-\omega)-\omega |e^{i\theta}|\le	|((2-\omega)-\omega   e^{i\theta})|\le (2-\omega)+ \omega |e^{i\theta}|=2.
		\]
		Hence, all the $n$ non-unity eigenvalues of $\CP^{-1}(\omega)\mathcal{A}$ can be bounded by an annulus centered at $(\frac{2}{2-\omega},0)$ with outer radius $\frac{\omega }{(2-\omega)(1-\omega)}$ and inner radius $\frac{\omega }{(2-\omega)}$, that is
		\[
		\sigma(\CP^{-1}(\omega)\mathcal{A})\subset \mathbb{A}_\omega:=\left\{ 
		z\in\IC:\ \frac{\omega }{(2-\omega)}\le \left|z-\frac{2}{2-\omega} \right| 
		\le  \frac{\omega }{(2-\omega)(1-\omega)}
		\right\}, 
		\]
		which  together with $1\in \mathbb{A}_\omega$ completes the proof.
	\end{proof}

{In Figures \ref{eigplotheat_omega} and   \ref{eigplotwave_omega} we  
 plot the eigenvalues of $\CP^{-1}(\omega)\CA$ for the linear heat equation  and   wave equation with $\omega=0.1$,  respectively. For the heat equation,   all the eigenvalues are real and located within an  interval $[1,1.11)$  as estimated in Theorem \ref{ThmPAheat_omega}.  For the wave equation, we see from Figure \ref{eigplotwave_omega} that   the eigenvalues are   located within an annulus with a very narrow bandwidth.   
In both cases, the spectrum $\sigma(\CP^{-1}(\omega)\CA)$ is uniformly bounded and clustered around $1$ and the results indicate that  the estimates in Theorems \ref{ThmPAheat_omega} and   \ref{ThmPAwave_omega}  are sharp.}  

\begin{figure}[htp!]
	\centering   
	\includegraphics[height=2.5in,width=5.65in]{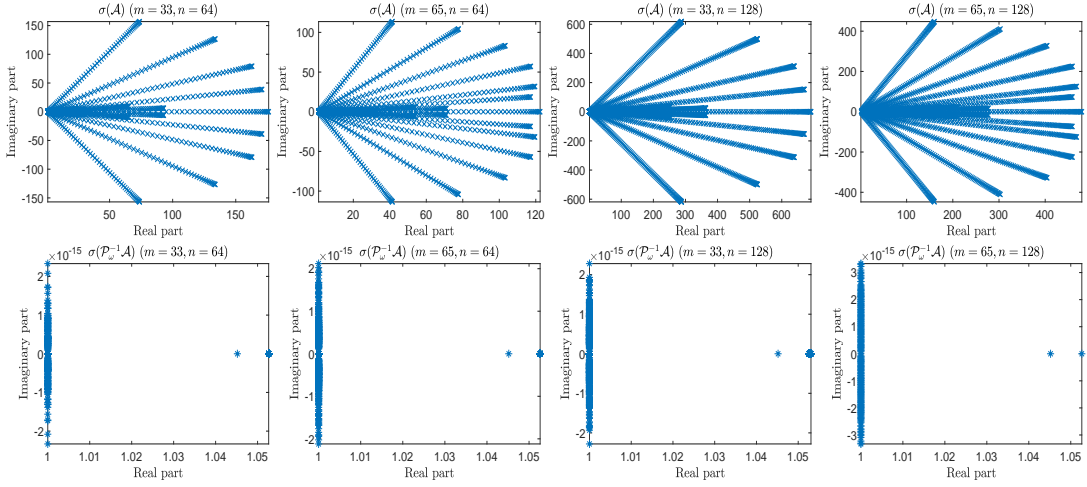} 
	\caption{The eigenvalues  of $\CA$ and $\CP^{-1}(\omega)\CA$ with $\omega=0.1$  {for  1D heat equation with $n=64,128$ and $m=33,65$}.  {Notice {that} all the eigenvalues  of $\CP^{-1}(\omega)\CA$ are real and located in the interval $[1,\frac{1}{1-\omega})=[1,\frac{1}{0.9})$.}} \label{eigplotheat_omega}
\end{figure} 

\begin{figure}[htp!]
	\centering   
	\includegraphics[height=3in,width=5.65in]{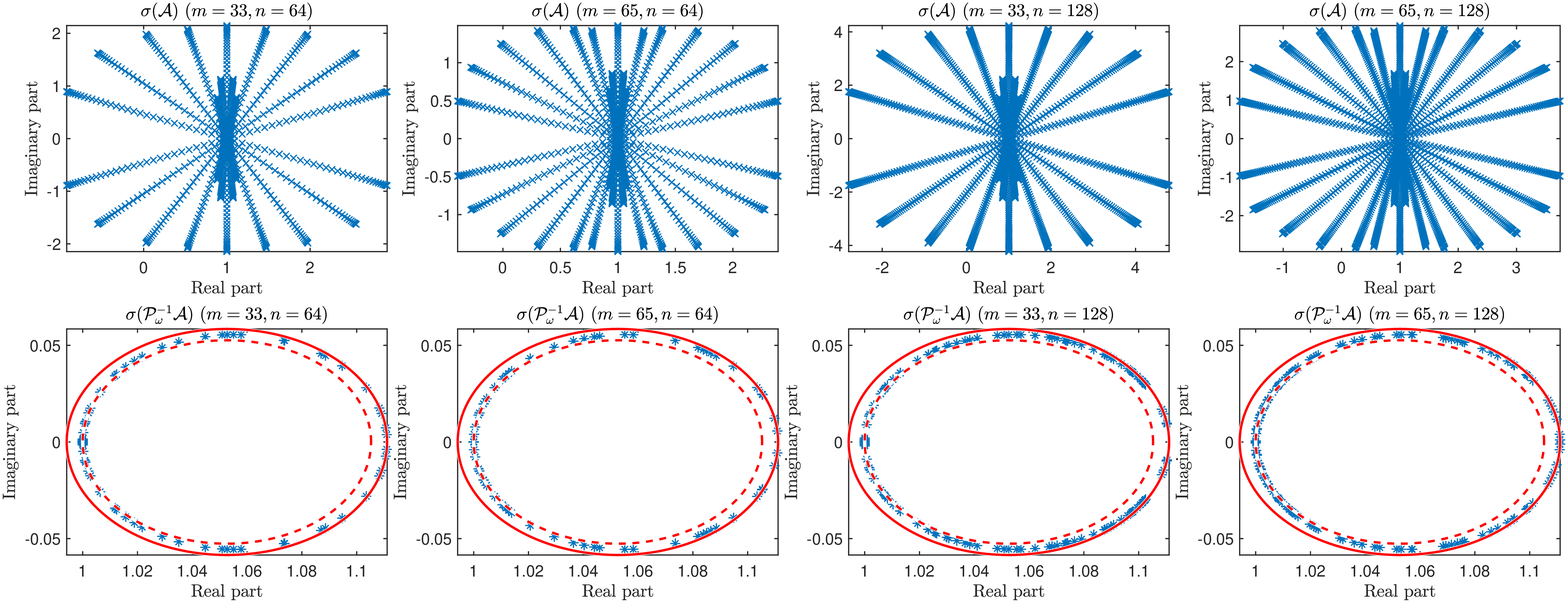} 
	\caption{The eigenvalues  of $\CA$ and $\CP^{-1}(\omega)\CA$ with $\omega=0.1$  {for  1D wave equation with $n=64,128$ and $m=33,65$}.  {Notice {that} all the eigenvalues  of $\CP^{-1}(\omega)\CA$ are locate within an annulus $\mathbb{A}_\omega$ (with center $(\frac{2}{2-\omega},0)$, inner circle radius $\frac{\omega }{(2-\omega)}$ in dashed line, outer circle radius $\frac{\omega }{(2-\omega)(1-\omega)}$ in solid line)}.} \label{eigplotwave_omega}
\end{figure} 

	\subsection{Time-varying and nonlinear case: {NKPA technique}}
	For the linear case with time-varying coefficient matrix $K(t)$, the {all-at-once    matrix  for the Sinc-Nystr\"{o}m method is}
	\begin{align} \label{ODEIVPSinc_A}
		\CA =I_m\otimes I_n-((I^{(-1)}D)\otimes I_n)\BK,
	\end{align}
	{where $\BK={\rm blkdiag}(K(t_{-{M}}),\cdots,K(t_{{M}}))$. 
	To get a diagonalization-based   PinT preconditioner, 
	the widely used approach is to follow the idea in  \cite{gander2017time} to construct an approximation (of  tensor structure)  to $\BK$, such  as}
	\bea \label{NKP}
	\BK\approx I_m\otimes \overline{K}, ~ \overline{K}:=\frac{1}{m}{{\sum}}_{j=-{M}}^{M} K(t_{j}),
	\eea
	{where $m=2{M}+1$}. 
	This leads to \pd 
	\begin{align} \label{ODEIVPSinc_P1}
		\overline\CP =I_m\otimes I_n-(SD\otimes I_n)(I_m\otimes \overline{K} ) 
		=I_m\otimes I_n-(SD\otimes \overline{K} ),
	\end{align}
	{which is of  the same structure as $\CP$ in (\ref{pre_P})} {and therefore the \dg-based PinT procedure \eqref{3step2} is applicable to   $\overline\CP$ as well}.  
	Such an {\textit{averaging}}-based Kronecker product approximation works well when $K(t)$ does not change dramatically over the considered time interval. {However, if $K(t)$ has very large variance on the Sinc time points, using the \pdd $\overline\CP$ may result in slow convergence rate or even divergence for the GMRES method. Here we propose another approximation of $\BK$ based on the  nearest Kronecker product approximation (NKPA) technique. The idea lies in approximating $\BK$ by a tensor structure matrix $\widehat{D}\otimes\overline K$  with diagonal matrix $\widehat{D}$ fixed by  }	
\begin{equation}\label{eq3.17}
	\min_{\widehat D \tn{ is diagonal}}\| \BK- \widehat D\otimes \overline K\|,
\end{equation}
	where  
	$\overline K$ is the averaging  matrix  given in \eqref{NKP}. {Under the Frobenius norm $\|\cdot\|_F$,  a}ccording to \cite[Thm. 3]{van1993approximation} the solution   $\widehat{D}={\rm diag}(D_1,\dots, D_m)$  of   \eqref{eq3.17} has an explicit formula 
	\begin{equation}  \label{Djj}
	\widehat D_j=\frac{{\rm trace}(K(t_j)^\T \overline K)}{{\rm trace}(\overline K^\T \overline K)},\quad j=1, 2, \dots, m, 
	\end{equation}
	where ${\rm trace}(\overline K^\T \overline K)>0$ is assumed.    This gives  the following \pd
	\begin{align} \label{ODEIVPSinc_P2}
		\widehat\CP =I_m\otimes I_n-(SD\otimes I_n) ( \widehat D\otimes \overline K)
		=I_m\otimes I_n-(SD\widehat D \otimes \overline{K} ),
	\end{align}
	which {is also of  the same structure as $\CP$ in (\ref{pre_P})} {since $D\widehat{D}$ is a diagonal matrix}. {Numerically, we find that such an improved preconditioner often  results in}  a significantly faster convergence rate than the averaging-based \pdd $\overline{\CP}$, especially when $\widehat D$  deviates largely from the identity matrix $I_m$ (i.e., $K(t)$ undergoes a large variance over the Sinc collation time points). For the case that $K(t)=K$ is constant,     $\overline{\CP}$ and $\widehat{\CP}$ are {identical}.
	Clearly, the  \dgg procedure (\ref{3step2})   {also applies  to}    $\widehat{\CP}$. However, the spectrum analysis  of {the} preconditioned matrices $\overline{\CP}^{-1}\CA$
	and $\widehat{\CP}^{-1}\CA$    {becomes extremely   difficult and further discussion on this is beyond the scope of this paper.} 
	Analogous to the definition of $\CP(\omega)$, we can also define $\widehat\CP(\omega)$ as  
	\begin{align} \label{ODEIVPSinc_P2_omega}
		\widehat\CP(\omega) =I_m\otimes I_n-(S(\omega)D\otimes I_n) ( \widehat D\otimes \overline K)
		=I_m\otimes I_n-(S(\omega)D\widehat D \otimes \overline{K} ),
	\end{align}
which is expected to perform better than $\widehat\CP$ when $\omega\in (0,1)$ is small.
	
	In the nonlinear case, the block-diagonal matrix $\mathsf{Q}(\bm y_h)$ in (\ref{Qh})   shares the same block-diagonal structure as $\BK$ and thus the aforementioned NKPA technique can be used, too.   This observation naturally  leads to a similar  NKPA-based  \pdd   for  the GMRES  method   used as an  inner solver  within the Newton iteration  \eqref{Newton}  for solving   the Jacobian system.   
	
		\section{Numerical examples}\label{sec5}
	In this section, we  {present numerical results} to illustrate the effectiveness of our proposed {\pd}s.
	All simulations are implemented using MATLAB on {a Dell Precision 5820 Tower Workstation with Intel(R) Core(TM) i9-10900X 3.70GHz CPU and 64GB RAM}.
	The CPU time (in seconds) is estimated  by using the timing functions \texttt{tic}/\texttt{toc},   based on the serial implementation of the preconditioned  iterative algorithms. We employ the right-preconditioned GMRES \cite{saad1986gmres} solver (without restarts) in the IFISS package \cite{ifiss,ers07,ers14}, 
	and choose a zero initial guess and a small stopping tolerance $\texttt{\tol}=10^{-10}$ (for high order accuracy purpose) based on the reduction in relative residual norms.  {The number of GMRES iterations for achieving the stopping tolerance is denoted by It$_{\rm G}$}. We will take $d=\pi/2$ and $\alpha=1$ in the {Sinc-Nystr\"{o}m}  method.
	In measuring the accuracy of the {Sinc-Nystr\"{o}m}  method, we will report the maximum error (denoted by `Error') between Sinc approximation and the exact solution (if known) over all non-uniform Sinc time points. 
		
	For both the heat and wave equations,  We discretize the Laplacian operator $\Delta$ by a {centered}  difference scheme in space with a uniform mesh step  size $h_x$ to get the discrete Laplacian matrix  $\Delta_{h_x}$. {For all numerical experiments, the \pd{s} proposed in this paper are used according to the \dgg procedure \eqref{3step2}.}		In rectangular domains with regular grids,
		the complex-shifted systems in Step-(ii) of \eqref{3step2}   are  solved in serial by MATLAB's sparse direct solver (Thomas algorithm)  and fast Poisson direct  solver \cite{saad2003iterative} (based on discrete sine transform) for 1D and 2D cases, respectively.
	 		For  more general domains with irregular grids (e.g., finite element discretization), fast iterative solvers  {(e.g., the  multigrid method \cite{2011Parallel,2018Solving,2021HG}, the domain decomposition method \cite{2015Domain} and the preconditioned GMRES method \cite{2015Applying}) can be used}.

	\subsection*{Example 1: linear 2D heat equation with constant coefficients}
{{We first consider the following}}
 2D heat equation defined on the space domain  $\Omega=(0,\pi)^2$:
	\begin{equation}\label{heatPDE}
		\begin{cases}
			y_{t}= \Delta y +g, &\tn{in} \Omega\times(0,T),\\
			y=0,&\tn{on} \partial\Omega\times(0, T), 
		\end{cases}
	\end{equation} 
	{where  the initial condition $y(x_1,x_2,0)=x_1(\pi-x_1)x_2(\pi-x_2)$ and source term $g$ are chosen such that the  exact solution is} $y(x_1,x_2,t)=x_1(\pi-x_1)x_2(\pi-x_2)e^{-t}$. 
	Table \ref{T2heat_2D} shows the error and convergence results   for the GMRES  method  without \pdd (denoted as `None') and with our PinT {\pd}s $\CP$ and $\CP(\omega)$  {(with $\omega=0.01$)}, respectively. 
	With the \pdd $\CP$ only a few iterations is sufficient to achieve {stopping tolerance}. 
	 Such a fast convergence rate is anticipated from the highly clustered spectrum distribution of $\CP^{-1}\CA$ given in Theorem \ref{ThmPA}.  
	Interestingly, for a fixed $n$ (e.g. $n=32^2$) we do observe that ${\rm It_G}$ slightly decreasing as $m$   increases, which is reasonable since $\CP^{-1}\CA$ has only $n$  non-unity eigenvalues regardless of $m$.
	For this example, the improved \pdd $\CP(\omega=0.01)$ shows almost the same convergence rate as $\CP$,
	which is anticipated since the spectrum of $\CP^{-1}\CA$ is already highly clustered,
	 as shown in Figures \ref{fig1eigplot} and \ref{eigplotheat_omega}.
	 Nevertheless, for $m=257$ the Error corresponding to $\CP(\omega=0.01)$ seems to slightly larger than that by $\CP$,
	 which is due to a larger roundoff error during \dg.

	\begin{table}[htp!]\small
		\centering 
		\caption{Results for Example 1 (2D heat PDE with constant coefficients, $T=2$,$\texttt{\tol}=10^{-10}$)}
		\begin{tabular}{|c|c||ccc||ccc|ccc|ccc|c|}\hline
			&&\multicolumn{3}{c|}{None}  &\multicolumn{3}{c|}{  $\CP$}&\multicolumn{3}{c|}{  $\CP(\omega)$ with $\omega=0.01$}
			\\
			\hline
			$n$	&$m$& Error & ${\rm It_G}$ &CPU&Error & ${\rm It_G}$ &CPU&Error & ${\rm It_G}$ &CPU\\   \hline 
			
			\multirow{4}{*}{$32^2$}	 
			&33 	 &1.3e-03&	  682&	15.30    &1.3e-03&	 	 4&	0.04&1.3e-03&	 	 3&	0.03\\
			&65 	 &3.5e-05&	  663&	23.52    &3.5e-05&	 	 3&	0.06&3.5e-05&	 	 3&	0.06\\
			&129 	 &2.0e-07&	  596&	177.73   &2.1e-07&	 	 3&	0.15&2.0e-07&	 	 3&	0.16\\
			&257 	 &5.0e-09&	  531&	230.47   &2.9e-10&	 	 3&	0.34&4.2e-08&	 	 3&	0.33\\
			\hline
			\multirow{4}{*}{$64^2$}	 
			&33 &&$>$1000&   &1.3e-03&		 4&	0.14&1.3e-03&	 	 3&	0.11\\
			&65 &&$>$1000&   &3.5e-05&		 3&	0.21&3.5e-05&	 	 3&	0.21\\
			&129&&$>$1000&   &2.1e-07&		 3&	0.45&2.1e-07&	 	 3&	0.43\\
			&257&&$>$1000&   &2.9e-10&		 3&	1.03&4.4e-08&	 	 3&	1.02\\
			\hline
			\multirow{4}{*}{$128^2$}	 
			&33 &&$>$1000&   &1.3e-03&		 5&	0.61&1.3e-03&	 	 3&	0.38\\
			&65 &&$>$1000&   &3.5e-05&		 3&	0.81&3.5e-05&	 	 3&	0.78\\
			&129&&$>$1000&   &2.1e-07&		 3&	1.73&2.1e-07&	 	 3&	1.74\\
			&257&&$>$1000&   &2.9e-10&		 3&	4.02&5.2e-08&	 	 3&	4.06\\
			
			\hline
		\end{tabular}
		\label{T2heat_2D}
	\end{table}
	
		\subsection*{Example 2: linear 2D heat equation with time-varying coefficients}
	{We next c}onsider a linear 2D heat equation with time-varying coefficient on $\Omega=(0,\pi)^2$:
	\begin{equation}\label{heatPDEtime}
		\begin{cases}
			y_{t}=\kappa(t) \Delta y, &\tn{in} \Omega\times(0,T),\\
			y=0,&\tn{on} \partial\Omega\times(0, T),\\ 
		\end{cases}
	\end{equation} 
	where $\kappa(t)=1/\left( (1.2+t)\ln(1.2+t)\right)$ and the initial condition $y(\cdot,0)$ is chosen such that the exact solution {is} $y(x_1,x_2,t)=x_1(\pi-x_1)x_2(\pi-x_2)/{\ln(1.2+t)}$. 
		{In} Figure \ref{Ex5eigplot} {we plot  the eigenvalues} of $\CA$, $\overline\CP^{-1}\CA$ and $\widehat\CP^{-1}\CA$ for a fixed space-time mesh  ($m=33$ and $n=16^2=256$). {For $\overline\CP$ we compute the    diagonal matrix  $\widehat D$ according to  the formula  \eqref{Djj} and from  Figure \ref{Ex5eigplot} on the top right we see that such a diagonal matrix} is indeed  very different from an identity matrix. {From the two subfigures on the bottom row we see that} the eigenvalues of $\widehat\CP^{-1}\CA$ are more clustered than that of $\overline\CP^{-1}\CA$.  
		\begin{figure}[htp!]
		\centering
		\includegraphics[width=0.9\textwidth]{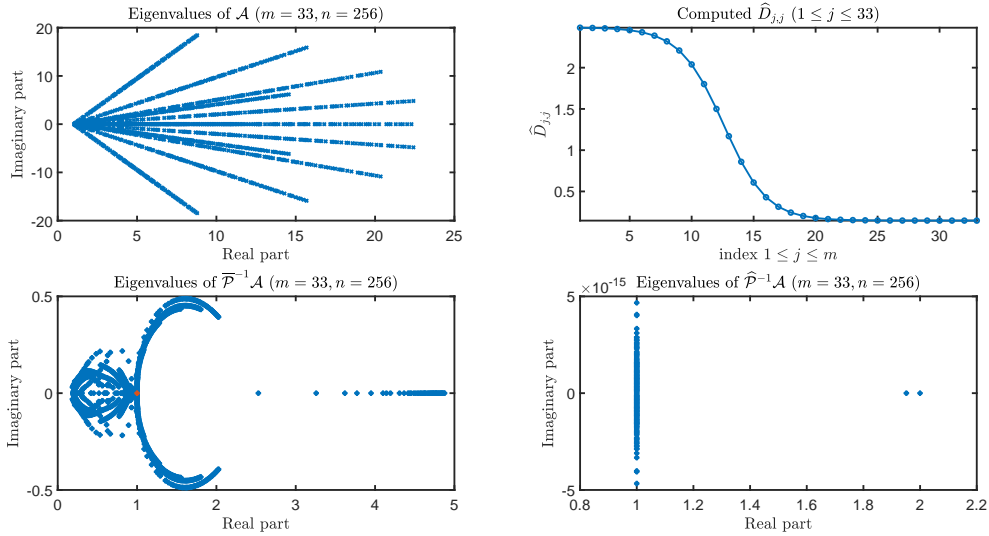} 
		\caption{The eigenvalue distribution of $\CA$, $\overline\CP^{-1}\CA$, and $\widehat\CP^{-1}\CA$ and   $\widehat D$ for Example 2 (2D time-varying case).} \label{Ex5eigplot}
	\end{figure}

	{In}
	Table \ref{TabHeat2Dtime}  {we} report   the errors and convergence results {for the GMRES method without \pdd and with two    PinT preconditioners $\overline\CP$ and $\widehat{\CP}$. It is clear that   the NKPA-based \pdd $\widehat{\CP}$ {in  \eqref{ODEIVPSinc_P2}} leads to} faster convergence rate than the averaging-based \pdd $\overline{\CP}$ given by  \eqref{ODEIVPSinc_P1}. {This result confirms very well the eigenvalue distribution of $\overline\CP^{-1}\CA$ and $\widehat\CP^{-1}\CA$ in Figure \ref{Ex5eigplot}. We also tested the GMRES method using the \pdd $\widehat{\CP}(\omega)$ with $\omega=0.01$, but the results are very similar to that of $\widehat{\CP}$. So we omit the presentation.} 
	\begin{table}[htp!] \small
		\centering 
		\caption{Results for  Example 2 (2D heat PDE with time-varying coefficients, $T=2, \tol=10^{-10}$)}
		\begin{tabular}{|c|c||ccc|ccc|ccc|ccc|ccc|c|}\hline
			&	&\multicolumn{3}{c|}{None} &\multicolumn{3}{c|}{$\overline{\CP}$} &\multicolumn{3}{c|}{$\widehat{\CP}$} 			  
			\\
			\hline
			$n$&	$m$& Error   & ${\rm It_G}$ &CPU&Error   & ${\rm It_G}$ &CPU& Error  & ${\rm It_G}$ &CPU  \\   \hline 
			\multirow{4}{*}{$32^2$} 
&33 	 &3.2e-02&	 	 669&	15.48    &3.2e-02&	 	 69&	0.76   &3.2e-02&	 	 4&	0.05    \\
&65 	 &8.9e-04&	 	 628&	21.92    &8.9e-04&	 	 68&	1.52   &8.9e-04&	 	 3&	0.08    \\
&129	 &5.1e-06&	 	 552&	148.42   &5.1e-06&	 	 69&	5.06   &5.1e-06&	 	 3&	0.17    \\
&257	 &2.7e-08&	 	 488&	223.90   &1.5e-08&	 	 69&	10.66  &6.7e-09&	 	 3&	0.41    \\

\hline
\multirow{4}{*}{$64^2$}		  
&33 &&$>$1000&   &3.2e-02&	 	 73&	4.59    &3.2e-02&	 	 4&	0.18  \\
&65 &&$>$1000&   &8.9e-04&	 	 71&	8.32    &8.9e-04&	 	 3&	0.29  \\
&129&&$>$1000&   &5.1e-06&	 	 71&	15.05   &5.1e-06&	 	 3&	0.60  \\
&257&&$>$1000&   &1.7e-08&	 	 71&	34.35   &6.7e-09&	 	 3&	1.38  \\
\hline
\multirow{4}{*}{$128^2$}	  
&33 &&$>$1000&    &3.2e-02&	  81&	18.45   &3.2e-02&	 	 5&	0.78   \\
&65 &&$>$1000&    &8.9e-04&	  72&	30.52   &8.9e-04&	 	 4&	1.35   \\
&129&&$>$1000&    &5.1e-06&	  72&	64.18   &5.2e-06&	 	 3&	2.46   \\
&257&&$>$1000&    &1.7e-08&	  72&	140.85  &6.6e-09&	 	 3&	5.50   \\
			\hline 
		\end{tabular}
		\label{TabHeat2Dtime}
	\end{table}

	\subsection*{Example 3: linear 2D wave equation} 
{We now c}onsider a linear 2D  wave equation defined on  $\Omega=(0,\pi)^2$:
	\begin{equation}\label{wavePDE}
		\begin{cases}
			y_{tt}=\Delta y+g, &\tn{in} \Omega\times(0,T),\\
			y=0,&\tn{on} \partial\Omega\times(0, T),\\
		\end{cases}
	\end{equation} 
	where the initial conditions $y(\cdot, 0)$ and $y_t(\cdot, 0)$ are fixed  according to   the   exact solution   $y(x_1,x_2,t)=x_1(\pi-x_1)x_2(\pi-x_2)\ln(1+t).$
	By defining $p=y_t$, {this} second-order wave equation can be {reduced to}  a first-order PDE system:
	\begin{equation}\label{wavePDE2}
		\begin{cases}
			y_t=p; &\tn{in} \Omega\times(0,T),\\
			p_{t}=\Delta y+g, &\tn{in} \Omega\times(0,T),\\
			y=0, p=0&\tn{on} \partial\Omega\times(0, T).\\
		\end{cases}
	\end{equation} 
	{By applying the centered   finite difference   in space with a uniform mesh step size $h_x$ to \eqref{wavePDE2}}, we   obtain a linear ODE system with a $2n\times 2n$ sparse constant coefficient matrix
	$$
	K=\bmt \bm 0 & I_n\\ \Delta_{h_x} & \bm 0\emt \in \IR^{2n\times 2n}.
	$$
	In
		Table \ref{T2wave}, we present  the   errors and convergence results of the GMRES method without  \pdd and with the proposed \pdd $\CP$ and $\CP(\omega)$ (with $\omega=0.01$). 
	In contrast to the above heat equations, {the   GMRES method without \pdd does not converge within 1000 iterations for all combinations of $n$ and $m$}.   
Fortunately, the improved preconditioner $\CP(\omega)$ with {a moderate parameter}  $\omega=0.01$ can achieve much faster mesh-independent convergence rates, which confirms   Theorem \ref{ThmPAwave_omega} very well.
	\begin{table}[htp!]\small
		\centering 
		\caption{Results for Example 3 (2D wave PDE, $T=2$,$\texttt{\tol}=10^{-10}$)}
		\begin{tabular}{|c|c||ccc||ccc|ccc|c|}\hline
			&&\multicolumn{3}{c|}{None}   &\multicolumn{3}{c|}{$\CP(\omega)$ with $\omega=0.01$}
			\\
			\hline
			$2n$	&$m$& Error & ${\rm It_G}$ &CPU &Error & ${\rm It_G}$ &CPU\\   \hline 
  
			\multirow{4}{*}{$2\times 32^2$}	 
&33 	&&	 	 $>$1000&                    &1.8e-03&	 	 5&	0.07\\
&65 	&&	 	 $>$1000&                  &4.9e-05&	 	 5&	0.16\\
&129     &&$>$1000&                        &2.8e-07&	 	 5&	0.39\\
&257     &&$>$1000&                      &4.1e-10&	 	 5&	0.78\\
\hline
\multirow{4}{*}{$2\times 64^2$}	
&33 	&&	 	 $>$1000&          &1.8e-03&	 	 5&	0.25\\
&65 	&&	 	 $>$1000&       &4.9e-05&	 	 5&	0.55\\
&129     &&$>$1000&            &2.8e-07&	 	 5&	1.20\\
&257     &&$>$1000&           &4.3e-10&	 	 5&	2.76\\ 
			\hline	 
		\end{tabular}
		\label{T2wave}
	\end{table}
	
%
	
	{The \pdd $\CP(\omega)$ contains a free parameter $\omega$ and in Figure \ref{CondV_omega} we have plotted the condition number of the eigenvector matrix of $S(\omega)D$ for different values of $\omega$. As we mentioned there, such a condition number is proportional to   the roundoff error arising from the \dgg procedure \eqref{3step2} and a large roundoff error will seriously pollute the \diss accuracy.  So, it would be interesting to illustrate how the parameter $\omega$ affects  \diss accuracy  in practice. In}  Table \ref{T3wave} we report  the  errors and convergence results of the GMRES method with  NKPA-based \pdd  $\CP(\omega)$ for a set of  values of $\omega$. 
	 We see that {for the first few $\omega$  the iteration number  decreases  as $\omega$ decreases, 
	 but it  re-bounces  when $\omega\le 10^{-9}$.   From the results for Error, we see that   the roundoff error  due to the \dgg procedure   
	 quickly contaminate the \diss accuracy.  In particular, for $\omega=10^{-15}$ the measured Error is very bad. This can be explained as follows. For such a small $\omega$, $\CP(\omega)$  approximately equals to      $I^{-1}D$, so applying the \dgg procedure to $\CP(\omega)$ is   equivalent to diagonalizing $I^{-1}D$ (as $I^{-1}D=U\Psi U^{-1}$), which is unstable due to a very large condition number   of $U$ (cf. Figure \ref{CondV_omega}).  For this example, 
	 it seems  $\omega\approx 10^{-6}$ is the best choice}.  
		\begin{table}[htp!] \small
		\centering 
		\caption{Results for Example 3 with the improved preconditioner $\CP(\omega)$ (2D wave PDE, $T=2$,$\texttt{\tol}=10^{-10}$)}
		\begin{tabular}{|c|c||cc||cc||cc||cc||cc|cccc|}\hline
			&&\multicolumn{2}{c|}{$\omega=10^{-3}$}  & \multicolumn{2}{c|}{$\omega=10^{-6}$}  & \multicolumn{2}{c|}{$\omega=10^{-9}$}  & \multicolumn{2}{c|}{$\omega=10^{-12}$} & \multicolumn{2}{c|}{$\omega=10^{-15}$}
			\\
			\hline
			$2n$	&$m$& Error & ${\rm It_G}$& Error & ${\rm It_G}$& Error & ${\rm It_G}$& Error & ${\rm It_G}$ &{Error} & ${\rm It_G}$ \\   \hline 
			 
	\multirow{4}{*}{$2\times 32^2$}	  
&33 	&1.8e-03&	  3&	 1.8e-03&	  2&   1.8e-03&	  2&     1.8e-03&	  2&	1.8e-03&	  2      \\
&65 	&4.9e-05&	  3&	 4.9e-05&	  2&   4.9e-05&	  2&     1.3e-04&	  3&	7.1e-05&	  3      \\
&129    &2.8e-07&	  3&	 2.8e-07&	  2&   9.7e-07&	  2&     3.6e-04&	  3&	2.3e-01&	  10     \\
&257    &9.9e-10&	  3&	 1.6e-09&	  2&   1.1e-06&	  3&     7.4e-04&	  35&	7.1e-01&	  10     \\
\hline                                   
\multirow{4}{*}{$2\times 64^2$}	         
&33 	&1.8e-03&	  3&	 1.8e-03&	  2&    1.8e-03&	  2&   1.8e-03&	  2&	  1.8e-03&	  2 	 \\
&65 	&4.9e-05&	  3&	 4.9e-05&	  2&    4.9e-05&	  2&   1.2e-04&	  3&	  7.4e-05&	  3 	 \\
&129    &2.9e-07&	  3&	 2.8e-07&	  2&    9.4e-07&	  2&   4.0e-04&	  3&	  1.8e-01&	  10     \\
&257    &1.0e-09&	  3&	 1.7e-09&	  2&    1.0e-06&	  3&   6.0e-04&	  40&     5.7e-01&	  10     \\
\hline	 
\multirow{4}{*}{$2\times 128^2$}	 
&33 	&1.8e-03&	  3&	 1.8e-03&	  2&   1.8e-03&	  2&   1.8e-03&	  2&	     1.8e-03&	  2 	 \\
&65 	&4.9e-05&	  3&	 4.9e-05&	  2&   4.9e-05&	  2&   1.7e-04&	  3&	     7.6e-05&	  3 	 \\
&129    &2.9e-07&	  3&	 2.8e-07&	  2&   1.1e-06&	  2&   3.8e-04&	  3&	     1.6e-01&	  10 	 \\
&257    &1.1e-09&	  3&	 1.1e-09&	  2&   8.4e-07&	  3&   5.1e-04&	  39&        5.6e-01&	  9 	 \\

			\hline	
		\end{tabular}
		\label{T3wave}
	\end{table}
	
	\subsection*{Example 4:   Allen--Cahn equation} 
	{At the end of this section, we}
	consider the  1D Allen--Cahn    equation \cite{kassam2005fourth} on a spatial domain  $\Omega=(-1,1)$:
	\begin{equation}\label{AllenCahn}
		\begin{cases}
			y_{t}=0.01 y_{xx}+y-y^3, &\tn{in} \Omega\times(0,T),\\
			y(-1,t)=-1,\ y(1,t)=1,&\tn{in} (0,T), \\
			y(x, 0)=0.53x+0.47 \sin(-1.5\pi x),&\tn{in} \Omega.
		\end{cases}
	\end{equation}
	{We first apply the  centered   finite difference scheme with a uniform mesh step size $h_x$ to get a nonlinear ODE system, for which} the  nonlinear Sinc-Nystr\"{o}m  system is solved by  Newton's method    (\ref{Newton}) with zero initial guess,
	where the Jacobian system  for each Newton iteration is   solved by GMRES without \pdd  and with the NKPA-based   \pdd $\widehat\CP$, respectively. 
	In Table \ref{T1AllenCahn}, we  show the errors and iteration numbers   for Newton's method {(denoted by  ${\rm It_{\rm N}}$)} and  the maximal iteration number of the GMRES method  over all the Newton iterations (denoted by It$_{\rm G}$).   	While costing the same number of outer Newton iterations,
	the \pdd  $\widehat\CP$ leads to  much  faster convergence for the GMRES method   and much less CPU time.
	Notice  that ${\rm It_G}$ for GMRES without \pdd  increases  dramatically as the spatial   size $n$ {grows}.  {(The results for the \pdd $\overline{\CP}$ defined by  \eqref{ODEIVPSinc_P1}
 is omitted since it gives the same  ${\rm It_G}$  as $\widehat{P}$,  perhaps due to  small variance  of the solution in   time.)  With a  generalized version of $\widehat\CP$, i.e., $\widehat\CP(\omega)$  with $\omega=0.01$,  we see in Table \ref{T1AllenCahn} that both  ${\rm It_G}$ and the CPU time can be further  reduced. }
 	\begin{table}[htp!] \small
		\centering 
		\caption{Results for Example 1 (1D Allen--Cahn PDE, $T=2$, $\texttt{\tol}=10^{-10})$}
		\begin{tabular}{|c|c||cccc||cccc||cccc|cccc|}\hline
			&&\multicolumn{4}{c|}{None}  &\multicolumn{4}{c|}{$\widehat\CP$} &\multicolumn{4}{c|}{$\widehat\CP(\omega)$ with $\omega=0.01$}
			\\
			\hline
			$n$	& $m$&Error   &${\rm It_{\rm N}}$&  ${\rm It_G}$ &CPU&Error   & ${\rm It_{\rm N}}$& ${\rm It_G}$ &CPU&Error   & ${\rm It_{\rm N}}$& ${\rm It_G}$ &CPU \\   \hline 
					\multirow{4}{*}{256}	
		&33 	  &2.6e-05&	5&	 473&	16.64    &2.6e-05&	5&	 14&	0.22  &2.6e-05&	5&	 7&	0.13\\
		&65 	  &4.7e-07&	5&	 419&	20.28    &4.7e-07&	5&	 14&	0.42  &4.7e-07&	5&	 7&	0.26\\
		&129 	  &1.9e-09&	5&	 362&	24.90    &1.9e-09&	5&	 14&	1.01  &1.9e-09&	5&	 7&	0.55\\
		&257 	  &1.5e-11&	5&	 314&	35.86    &1.4e-11&	5&	 14&	2.36  &1.4e-11&	5&	 7&	1.30\\
		\hline 
		\multirow{4}{*}{512}	
		&33 	&&&$>$1000&   &2.6e-05&	5&	 14&	0.68  &2.6e-05&	5&	 7&	0.50\\
		&65 	&&&$>$1000&   &4.7e-07&	5&	 14&	1.29  &4.7e-07&	5&	 7&	0.97\\
		&129 	&&&$>$1000&   &1.9e-09&	5&	 14&	2.95  &1.9e-09&	5&	 7&	2.06\\
		&257 	&&&$>$1000&   &1.4e-11&	5&	 14&	7.05  &1.4e-11&	5&	 7&	4.68\\
		\hline 
		\multirow{4}{*}{1024}	
		&33 	&&&$>$1000&   &2.6e-05&	5&	 14&	1.99  &2.6e-05&	5&	 7&	1.28\\
		&65 	&&&$>$1000&   &4.7e-07&	5&	 14&	3.95  &4.7e-07&	5&	 7&	3.38\\
		&129 	&&&$>$1000&   &1.9e-09&	5&	 14&	9.28  &1.9e-09&	5&	 7&	7.12\\
		&257 	&&&$>$1000&   &1.4e-11&	5&	 14&	19.50 &1.4e-11&	5&	 7&	14.72\\
			\hline 
		\end{tabular}
		\label{T1AllenCahn}
	\end{table}

	Since the exact solution is unknown, we compute the reference solution by using MATLAB's ODE solver \texttt{ode15s} with a very small tolerance $10^{-12}$ and the same space-time mesh. As expected, the reported errors in Table \ref{T1AllenCahn}  shows an exponential order of accuracy in time.  Figure \ref{ACsolplot} illustrates the reference and approximate solutions,
	where we see clearly how the   non-uniform Sinc mesh points in time cluster near $t=0$ and $t=T$.
	\begin{figure}[htp!]
		\centering
		\includegraphics[width=1\textwidth]{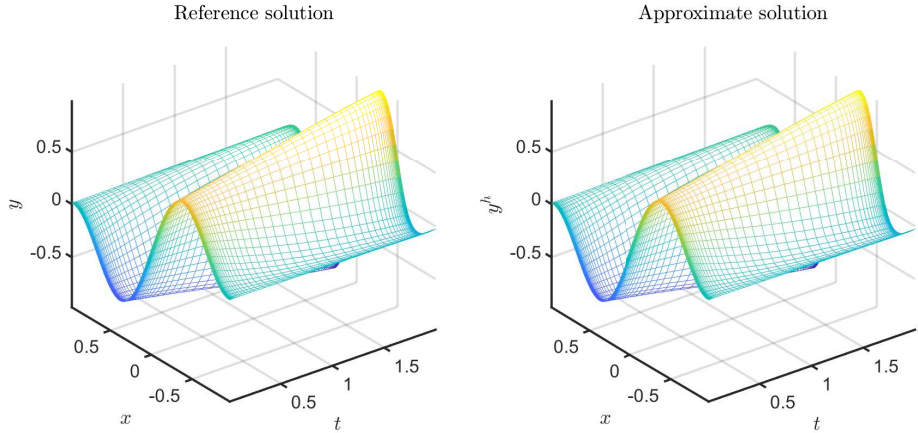} 
		\caption{The reference and Sinc approximation for Example 4 (1D Allen--Cahn PDE) with $m=257, n=64$.} \label{ACsolplot}
	\end{figure}

	\section{Conclusion}
The
	Sinc-Nystr\"{o}m  method for {the initial-value   ODEs}  can achieve   exponential order of accuracy in time {and for this method  the linear  (or nonlinear)  all-at-once system is the major problem  that we need to handle in practice. In this paper, we proposed   some efficient  {\pd}s for solving such an  all-at-once system for both the parabolic and hyperbolic problems.  The construction of the \pdd is based on      looking  insight into  a special structure of the \diss matrix of the Sinc-Nystr\"{o}m  method, namely the   {\em Toeplitz-times-diagonal}  structure.  The spectrum  analysis and the extensive  numerical results  indicate that the preconditioned GMRES method has    mesh-independent convergence rates. Moreover, if parallel computer is available, the proposed \pd{s} can be used in parallel for all the Sinc time points, following a block  \dgg procedure (cf. \eqref{3step2}). We have shown that this idea works, because such a \dgg is well conditioned, i.e., the condition number of the eigenvector matrix of the block \dgg is a moderate quantity and only weakly grows as the number of Sinc time points  increases  (cf. Figure \ref{CondV_omega}).}

{It would be interesting   to generalize this work to  other spectral methods (e.g. Chebyshev method). In the previous work  \cite{2021Spectral,2008Accuracy}, it was shown that these methods can be  very useful   in improving the accuracy  in time of the numerical solutions, but the large scale all-at-once system could be a serious problem  for applying these methods to time-dependent PDEs.  Such a generalization is by no means  trivial,    because the structure of    {all-at-once} matrix is completely different   from  that of the Sinc-Nystr\"{o}m method and therefore the construction of the \pdd and the spectral analysis of the preconditioned matrix need new ideas}.  
\section*{Acknowledgement}    
The authors would like to thank Dr. Xiang-Sheng Wang from University of Louisiana at Lafayette for pointing out a flaw in the proof of Lemma 3.2.

 \bibliographystyle{siam} 
\bibliography{Sinc,waveControl} 

\begin{thebibliography}{10}

\bibitem{abu2003properties}
{\sc I.~T. Abu-Jeib and T.~S. Shores}, {\em On properties of matrix {${I}^{(-
  1)}$} of {S}inc methods}, New Zealand J. Math, 32 (2003), pp.~1--10.

\bibitem{2011Parallel}
{\sc L.~Banjai and D.~Peterseim}, {\em Parallel multistep methods for linear
  evolution problems}, IMA Journal of Numerical Analysis, 32 (2011),
  pp.~1217--1240(24).

\bibitem{Bialecki1989}
{\sc B.~Bialecki}, {\em Sinc-{N}ystr\"{o}m method for numerical solution of a
  dominant system of cauchy singular integral equations given on a piecewise
  smooth contour}, SIAM Journal on Numerical Analysis, 26 (1989),
  pp.~1194--1211.

\bibitem{Bialecki1988}
{\sc B.~Bialecki and F.~Stenger}, {\em Sinc-{N}ystr\"{o}m method for numerical
  solution of one-dimensional cauchy singular integral equation given on a
  smooth arc in the complex plane}, Mathematics of Computation, 51 (1988),
  pp.~133--165.

\bibitem{christlieb2011implicit}
{\sc A.~Christlieb and B.~Ong}, {\em Implicit parallel time integrators},
  Journal of Scientific Computing, 49 (2011), pp.~167--179.

\bibitem{danieli2021spacetime}
{\sc F.~Danieli, B.~S. Southworth, and A.~J. Wathen}, {\em Space-time block
  preconditioning for incompressible flow}, arXiv (2021): 2005.09158
  (http://arxiv.org/abs/2101.07003v1).

\bibitem{ers07}
{\sc H.~Elman, A.~Ramage, and D.~Silvester}, {\em Algorithm {866}: {IFISS}, a
  {M}atlab toolbox for modelling incompressible flow}, ACM Transactions on
  Mathematical Software, 33 (2007), pp.~2--14.

\bibitem{ers14}
\leavevmode\vrule height 2pt depth -1.6pt width 23pt, {\em {IFISS}: A
  computational laboratory for investigating incompressible flow problems},
  SIAM Review, 56 (2014), pp.~261--273.

\bibitem{FS14}
{\sc R.~D. Falgout, S.~Friedhoff, T.~V. Kolev, S.~P. MacLachlan, and J.~B.
  Schroder}, {\em Parallel time integration with multigrid}, SIAM Journal on
  Scientific Computing, 36 (2014), pp.~C635--C661.

\bibitem{gander201550}
{\sc M.~J. Gander}, {\em 50 years of time parallel time integration}, in
  Multiple shooting and time domain decomposition methods, Springer, 2015,
  pp.~69--113.

\bibitem{2015Applying}
{\sc M.~J. Gander, I.~G. Graham, and E.~A. Spence}, {\em Applying {GMRES} to
  the {H}elmholtz equation with shifted laplacian preconditioning: what is the
  largest shift for which wavenumber-independent convergence is guaranteed?},
  Numerische Mathematik, 131 (2015), pp.~567--614.

\bibitem{gander2017time}
{\sc M.~J. Gander and L.~Halpern}, {\em Time parallelization for nonlinear
  problems based on diagonalization}, in Domain Decomposition Methods in
  Science and Engineering XXIII, Springer, 2017, pp.~163--170.

\bibitem{GH19}
{\sc M.~J. Gander, L.~Halpern, J.~Rannou, and J.~Ryan}, {\em A direct time
  parallel solver by diagonalization for the wave equation}, SIAM Journal on
  Scientific Computing, 41 (2019), pp.~A220--A245.

\bibitem{gander2020paradiag}
{\sc M.~J. Gander, J.~Liu, S.-L. Wu, X.~Yue, and T.~Zhou}, {\em Para{Diag}:
  Parallel-in-time algorithms based on the diagonalization technique}, arXiv
  (2021): 2005.09158 (http://arxiv.org/abs/2005.09158).

\bibitem{GW19a}
{\sc M.~J. Gander and S.-L. Wu}, {\em Convergence analysis of a periodic-like
  waveform relaxation method for initial-value problems via the diagonalization
  technique}, Numerische Mathematik, 143 (2019), pp.~489--527.

\bibitem{GW19b}
{\sc A.~Goddard and A.~Wathen}, {\em A note on parallel preconditioning for
  all-at-once evolutionary {PDE}s}, Electronic Transactions on Numerical
  Analysis, 51 (2019), pp.~135--150.

\bibitem{golub2012matrix}
{\sc G.~Golub and C.~Van~Loan}, {\em Matrix Computations}, Matrix Computations,
  Johns Hopkins University Press, 2012.

\bibitem{2015Domain}
{\sc I.~G. Graham, E.~A. Spence, and E.~Vainikko}, {\em Domain decomposition
  preconditioning for high-frequency {H}elmholtz problems with absorption},
  Mathematics of Computation, 86 (2015), pp.~2559--2604.

\bibitem{greenbaum1996any}
{\sc A.~Greenbaum, V.~Pt{\'a}k, and Z.~e.~k. Strako{\v{s}}}, {\em Any
  nonincreasing convergence curve is possible for {GMRES}}, SIAM Journal on
  Matrix Analysis and Applications, 17 (1996), pp.~465--469.

\bibitem{han2014proof}
{\sc L.~Han and J.~Xu}, {\em Proof of {S}tenger’s conjecture on matrix
  {${I}^{(- 1)}$} of {S}inc methods}, Journal of Computational and Applied
  Mathematics, 255 (2014), pp.~805--811.

\bibitem{hara2019error}
{\sc R.~Hara and T.~Okayama}, {\em Error analyses of {S}inc-{N}ystr{\"o}m
  methods for initial value problems}, Nonlinear Theory and Its Applications,
  IEICE, 10 (2019), pp.~465--484.

\bibitem{2021HG}
{\sc L.~R. Hocking and C.~Greif}, {\em Optimal complex relaxation parameters in
  multigrid for complex-shifted linear systems}, SIAM Journal on Matrix
  Analysis and Applications, 42 (2021), pp.~475--502.

\bibitem{kassam2005fourth}
{\sc A.-K. Kassam and L.~N. Trefethen}, {\em Fourth-order time-stepping for
  stiff {PDE}s}, SIAM Journal on Scientific Computing, 26 (2005),
  pp.~1214--1233.

\bibitem{Lin2013}
{\sc F.-R. Lin, X.~Lu, and X.-Q. Jin}, {\em Sinc-{N}ystr\"{o}m method for
  singularly perturbed {L}ove's integral equation}, East Asian Journal on
  Applied Mathematics, 3 (2013), pp.~48--58.

\bibitem{2021A}
{\sc X.~L. Lin, M.~K. Ng, and Y.~Zhi}, {\em A parallel-in-time two-sided
  preconditioning for all-at-once system from a non-local evolutionary equation
  with weakly singular kernel}, Journal of Computational Physics, 434 (2021),
  p.~110221.

\bibitem{LMT01}
{\sc J.-L. Lions, Y.~Maday, and G.~Turinici}, {\em {A ``parareal'' in time
  discretization of {PDE}'s}}, Comptes Rendus de l'Académie des Sciences -
  Series I - Mathematics, 332 (2001), pp.~661--668.

\bibitem{LW20}
{\sc J.~Liu and S.~L. Wu}, {\em A fast block $\alpha$-circulant preconditoner
  for all-at-once systems from wave equations}, SIAM Journal on Matrix Analysis
  and Applications, 41 (2020), pp.~1912--1943.

\bibitem{2021Spectral}
{\sc S.~H. Lui and S.~Nataj}, {\em Spectral collocation in space and time for
  linear pdes}, Journal of Computational Physics, 424 (2021), p.~109843.

\bibitem{MR08}
{\sc Y.~Maday and E.~M. R{\o}nquist}, {\em Parallelization in time through
  tensor-product space{\textendash}time solvers}, Comptes Rendus Mathematique,
  346 (2008), pp.~113--118.

\bibitem{MPW18}
{\sc E.~McDonald, J.~Pestana, and A.~Wathen}, {\em Preconditioning and
  iterative solution of all-at-once systems for evolutionary partial
  differential equations}, {SIAM} Journal on Scientific Computing, 40 (2018),
  pp.~A1012--A1033.

\bibitem{2011Eigenvalue}
{\sc C.~Mehl, V.~Mehrmann, A.~Ran, and L.~Rodman}, {\em Eigenvalue perturbation
  theory of classes of structured matrices under generic structured rank one
  perturbations}, Linear Algebra and its Applications, 435 (2011),
  pp.~687--716.

\bibitem{meurant2015role}
{\sc G.~Meurant and J.~D. Tebbens}, {\em The role eigenvalues play in forming
  {GMRES} residual norms with non-normal matrices}, Numerical Algorithms, 68
  (2015), pp.~143--165.

\bibitem{ng2003circulant}
{\sc M.~K. Ng}, {\em Circulant and skew-circulant splitting methods for
  {T}oeplitz systems}, Journal of Computational and Applied Mathematics, 159
  (2003), pp.~101--108.

\bibitem{okayama2018theoretical}
{\sc T.~Okayama}, {\em Theoretical analysis of {S}inc-collocation methods and
  {S}inc-{N}ystr{\"o}m methods for systems of initial value problems}, BIT
  Numerical Mathematics, 58 (2018), pp.~199--220.

\bibitem{okayama2013error}
{\sc T.~Okayama, T.~Matsuo, and M.~Sugihara}, {\em Error estimates with
  explicit constants for {S}inc approximation, {S}inc quadrature and {S}inc
  indefinite integration}, Numerische Mathematik, 124 (2013), pp.~361--394.

\bibitem{okayama2015theoretical}
\leavevmode\vrule height 2pt depth -1.6pt width 23pt, {\em Theoretical analysis
  of {S}inc-{N}ystr{\"o}m methods for volterra integral equations}, Mathematics
  of Computation, 84 (2015), pp.~1189--1215.

\bibitem{ong2020deferred}
{\sc B.~W. Ong and R.~J. Spiteri}, {\em Deferred correction methods for
  ordinary differential equations}, Journal of Scientific Computing, 83 (2020),
  pp.~1--29.

\bibitem{Rahmoune2020}
{\sc A.~Rahmoune and A.~Guechi}, {\em Sinc-{N}ystr{\"o}m methods for {F}redholm
  integral equations of the second kind over infinite intervals}, Applied
  Numerical Mathematics, 157 (2020), pp.~579--589.

\bibitem{saad2003iterative}
{\sc Y.~Saad}, {\em Iterative Methods for Sparse Linear Systems: Second
  Edition}, SIAM, Philadelphia, PA, 2003.

\bibitem{saad1986gmres}
{\sc Y.~Saad and M.~H. Schultz}, {\em {GMRES}: A generalized minimal residual
  algorithm for solving nonsymmetric linear systems}, SIAM Journal on
  Scientific and Statistical Computing, 7 (1986), pp.~856--869.

\bibitem{Shu2018TOWARD}
{\sc Shu-Lin}, {\em Toward parallel coarse grid correction for the parareal
  algorithm}, SIAM Journal on Scientific Computing, 40 (2018),
  pp.~A1446--A1472.

\bibitem{ifiss}
{\sc D.~Silvester, H.~Elman, and A.~Ramage}, {\em {I}ncompressible {F}low and
  {I}terative {S}olver {S}oftware ({IFISS}) version 3.6}, November 2021.
\newblock {\tt http://www.manchester.ac.uk/ifiss/}.

\bibitem{stenger2012numerical}
{\sc F.~Stenger}, {\em Numerical methods based on {S}inc and analytic
  functions}, vol.~20, Springer Series in Computational Mathematics,
  Springer-Verlag, 2012.

\bibitem{2008Accuracy}
{\sc T.~Tao and X.~Xiang}, {\em Accuracy enhancement using spectral
  postprocessing for differential equations and integral equations},
  Communications in Computational Physics, 5 (2008), pp.~779--792.

\bibitem{van1993approximation}
{\sc C.~F. Van~Loan and N.~Pitsianis}, {\em Approximation with {K}ronecker
  products}, in Linear algebra for large scale and real-time applications,
  Springer, 1993, pp.~293--314.

\bibitem{wathen2015preconditioning}
{\sc A.~J. Wathen}, {\em Preconditioning}, Acta Numerica, 24 (2015),
  pp.~329--376.

\bibitem{2018Solving}
{\sc S.~L. Wu, H.~Zhang, and T.~Zhou}, {\em Solving time-periodic fractional
  diffusion equations via diagonalization technique and multigrid}, Numerical
  Linear Algebra with Applications,  (2018), p.~e2178.

\end{thebibliography}

\end{document}